\let\uml\"
\DeclareMathOperator{\bN}{\mathbb{N}}
\DeclareMathOperator{\bR}{\mathbb{R}}
\DeclareMathOperator{\cB}{\mathcal{B}}
\DeclareMathOperator{\cO}{\mathcal{O}}
\begin{document}

\title[The one-sided E.H.T. and Liouville numbers]{Everywhere divergence of the one-sided ergodic Hilbert transform and Liouville numbers}

\author{David Constantine}
\address{Department of Mathematics and Computer Science, Wesleyan University, 
265 Church St.,
Middletown, CT 06459}
\email{dconstantine@wesleyan.edu}

\author{Joanna Furno}
\address{Department of Mathematical Sciences, Indiana University-Purdue University Indianapolis, 
402 N. Blackford, LD 270
Indianapolis, IN 46202} 
\email{jfurno@iupui.edu}

\newcounter{main}
\theoremstyle{plain} \newtheorem{mainthm}[main]{Theorem}
\theoremstyle{plain} \newtheorem*{thm2}{Theorem 2}
\theoremstyle{plain} \newtheorem*{thm3}{Theorem 3}
\theoremstyle{plain} \newtheorem{thm}{Theorem}[section]
\theoremstyle{plain} \newtheorem{lemma}[thm]{Lemma}
\theoremstyle{plain} \newtheorem{prop}[thm]{Proposition}
\theoremstyle{plain} \newtheorem{cor}[thm]{Corollary}	
\theoremstyle{definition} \newtheorem{defn}[thm]{Definition}	
\theoremstyle{remark} \newtheorem{rmk}[thm]{Remark}
\theoremstyle{remark} \newtheorem{obs}[thm]{Observation}
\theoremstyle{remark} \newtheorem*{quest}{Question}
\theoremstyle{remark} \newtheorem{ex}[thm]{Example}

\begin{abstract}
We prove some results on the behavior of infinite sums of the form $\sum f\circ T^n(x)\frac{1}{n}$, where $T:S^1\to S^1$ is an irrational circle rotation and $f$ is a mean-zero function on $S^1$. In particular, we show that for a certain class of functions $f$, there are Liouville $\alpha$ for which this sum diverges everywhere. We also show that there are Liouville $\alpha$ for which the sum converges everywhere.
\end{abstract}

\maketitle
\tableofcontents


\section{Introduction}



\begin{quest}
Let $(X, \cB, \mu)$ be a probability measure space. Let $T$ be an invertible, measure-preserving, ergodic transformation on $(X, \cB, \mu)$. Let $\sum b_n$ be a positive, divergent series. Under what conditions do sums of the form

\begin{equation}\label{main_eqn}
	 \sum_{n=1}^\infty f\circ T^n (x) b_n  
\end{equation}
converge or diverge?
\end{quest}

In the specific case $b_n=\frac{1}{n}$, the sum \eqref{main_eqn} is known as the \emph{one-sided ergodic Hilbert transform} of $f$. Its convergence properties are of interest in part because convergence of equation \eqref{main_eqn} ensures convergence of the Birkhoff sums, and so results on the convergence or divergence of equation \eqref{main_eqn} provide a stronger version of Birkhoff's theorem, or indicate that no strengthening in this direction is possible.

For a fixed transformation $T$, conditions under which one-sided ergodic Hilbert transforms converge are very well studied. 
The first study of convergence was by Izumi in \cite{izumi}. 
Following Izumi's work, Halmos showed in \cite{halmos} that if the measure is non-atomic, then there are $L^2$ functions $f$ for which \eqref{main_eqn} diverges for almost every $x$. Similarly, Dowker and Erd{\"o}s showed in \cite{de} showed that if the measure is nonatomic, then there are $L^{\infty}$ functions $f$ for which \eqref{main_eqn} diverges for almost every $x$. 

Kakutani and Petersen in \cite{pk} extended the results of Dowker and Erd{\"o}s to show that mean zero functions for which the supremum of the norms of the partial sums of \eqref{main_eqn} is infinite for a.e. $x$ always exist in $L^\infty$ (see, similarly, \cite{krzyzewski}). In \cite{del_Junco_Rosenblatt}, del Junco and Rosenblatt show that a.e.-divergence is generic in the function $f$. These results have been of the following form: given $T$, $b_n$, $x$, there exist mean zero functions $f$ (with some regularity properties specified -- continuous $f$ are possible, see \cite{del_Junco_Rosenblatt}) such that the sum \eqref{main_eqn} diverges. 
Often, a careful study of the regularity of $f$ or its Fourier coefficients plays a key role.

 Divergent sum behavior has subsequently been investigated in very general contexts. \cite{Berkes_Weber} provides a monograph-length treatment of the subject, and \cite{al} provides a good overview of work in the general setting of contracting operators on Banach spaces. Further results can also be found in \cite{cuny}, \cite{cl}, \cite{ccl}, and \cite{lss}.

In contrast to these general settings and the non-constructive proofs that appear, we will demonstrate divergence in the case of the simple and well-understood dynamics of circle rotations, with specific random variables $f$ that are quite simple -- essentially indicator functions of intervals. 

Let $\alpha \in (0,1)$ be an irrational number and let $T:=R_\alpha$ be the rotation by $\alpha$ on $S^1 = \mathbb{R}/\mathbb{Z}$.  That is, $Tx = x+\alpha \mbox{ (mod 1)}$.  Let $f:S^1\to \mathbb{R}$ be any mean zero function on $S^1$ with respect to the Lebesgue measure.

\begin{defn}
Given $T=R_{\alpha}$, we say $\alpha$ is \emph{convergent} or \emph{divergent} for a point $x$ and a function $f$ according to the behavior of the series \eqref{main_eqn}.
\end{defn}

In Section \ref{sec:divergent} below we prove our first main theorem:

\begin{mainthm}\label{divergent theorem}
Let $b_n=\frac{1}{n}$ and let $f=2\chi_U -1$ where $U$ is a finite union of disjoint intervals with $m(U)=1/2$. Then there are irrational $\alpha$ which are divergent for all points $x$. Such an $\alpha$ can be provided explicitly in terms of its continued fraction expansion.
\end{mainthm}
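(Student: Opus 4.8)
The plan is to prescribe $\alpha=[0;a_1,a_2,\dots]$ through its continued fraction, with convergents $p_k/q_k$, arranging two properties. First, every $q_k$ is to be odd; since $q_0=1$ and $q_{k+1}=a_{k+1}q_k+q_{k-1}$, this is secured by taking $a_1$ odd and $a_k$ even for all $k\ge 2$. Second, the partial quotients should grow extremely fast, say $a_{k+1}\ge e^{2q_k^2}$, so that $\alpha$ is strongly Liouville and $\|q_k\alpha\|=|q_k\alpha-p_k|$ is comparable to $1/(a_{k+1}q_k)$, hence minuscule; the two requirements are compatible (round the huge values up to even parity). The oddness enters through the Birkhoff-type quantity $\widetilde F_{q_k}(x):=\sum_{n=0}^{q_k-1}f(T^nx)=2\,\#\{0\le n<q_k:T^nx\in U\}-q_k$, which, because $m(U)=1/2$ and $q_k$ is odd, is an \emph{odd integer}, so $|\widetilde F_{q_k}(x)|\ge 1$ for \emph{every} $x$ (while a Denjoy--Koksma/three-distance bound keeps $|\widetilde F_{q_k}(x)|\le\mathrm{Var}(f)$, a fixed constant since $\partial U$ is finite).

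Writing $S_N(x):=\sum_{n=1}^N f(T^nx)/n$ for the partial sums of \eqref{main_eqn}, the heart of the argument is a resonance estimate for the block $S_N(x)-S_{q_k}(x)=\sum_{n=q_k+1}^N f(T^nx)/n$ with $q_k<N\le q_{k+1}$. First I would write $n=iq_k+j$ and use that $T^{q_k}$ is rotation by $q_k\alpha-p_k=\pm\|q_k\alpha\|$: along a fixed residue $j$, the point $T^nx$ traces an arithmetic progression of common difference $\|q_k\alpha\|$ through a short interval around $T^jx$. When $N/q_k$ is still tiny compared with $a_{k+1}$ these sweeps have length $\ll 1/q_k$, so (as $\partial U$ is finite and the $q_k$ points $T^jx$ are $\sim q_k^{-1}$-separated) $f$ is constant along the sweep for all but $O(1)$ residues; collecting the $\sum_i 1/(iq_k)$ contributions yields an estimate of the shape
\[ S_N(x)-S_{q_k}(x)=\frac{\log(N/q_k)}{q_k}\bigl(\widetilde F_{q_k}(x)+\mathcal E_k(x,N)\bigr)+O(1), \]
where $\mathcal E_k(x,N)$ is the bounded contribution of the $O(1)$ residues whose sweep meets $\partial U$, and vanishes once the orbit $\{T^jx:j\le q_k\}$ stays farther than $(N/q_k)\|q_k\alpha\|$ from $\partial U$. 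Choosing $N=N_k$ with $\log(N_k/q_k)\approx q_k^2$ (still far below $q_{k+1}$ by the growth of $a_{k+1}$), the main term is $\approx q_k\,\widetilde F_{q_k}(x)$, of absolute value $\ge q_k$ once $\mathcal E_k$ is under control; since $q_k,N_k\to\infty$, this shows $(S_N(x))$ is not Cauchy, i.e.\ the series diverges at $x$.

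For \emph{almost every} $x$ this already finishes: the set of $x$ whose $q_k$-orbit comes within $(N_k/q_k)\|q_k\alpha\|$ of the finite set $\partial U$ has measure $\lesssim N_k/(q_k a_{k+1})$, which is summable in $k$, so Borel--Cantelli puts a.e.\ $x$ outside it for all large $k$. The genuinely delicate point — the step I expect to be the main obstacle — is promoting this to \emph{every} $x$, since the argument as stated fails on a null set, with the secondary danger that $\mathcal E_k(x,N)$ is a nonzero integer exactly cancelling the small integer $\widetilde F_{q_k}(x)$. I would handle this by choosing the $a_k$ inductively: at stage $k$ the finite set $\bigcup_{j\le q_k}T^{-j}(\partial U)$ is explicitly known, and one can take $a_{k+1}$ large enough that the shrinking neighborhoods used at successive levels have empty $\limsup$, forcing every $x$ to be good for infinitely many $k$; and by exploiting the freedom in the endpoint $N\in(q_k,q_{k+1}]$, observing that $\mathcal E_k(x,N)$ changes with $N$ (according to which sweeps have already crossed $\partial U$) while the main term scales like $\log(N/q_k)$, so a cancellation cannot persist over the whole range and some block difference is forced to be large. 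The combination of $|\widetilde F_{q_k}(x)|\ge 1$ (from the parity of $q_k$) with this flexibility in $N$ is what should make the divergence uniform over all $x$, and the explicit inductive recipe for the $a_k$ furnishes the promised explicit $\alpha$.
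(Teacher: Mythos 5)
Your skeleton matches the paper's: force every $q_k$ odd by parity of the partial quotients, use the parity fact $|\widetilde F_{q_k}(x)|\ge 1$ together with Denjoy--Koksma, and use huge $a_{k+1}$ so that over each block of length $q_k$ the harmonic weights contribute $\sim \log$ factors and the surplus accumulates. But the part you yourself flag as ``the main obstacle'' --- upgrading from a.e.\ $x$ to \emph{every} $x$ --- is exactly where the real content lies, and neither of your two suggestions closes it as stated. The first (choose $a_{k+1}$ so the shrinking bad neighborhoods have empty $\limsup$) cannot work: the bad set at level $k$ is a neighborhood of $P_k=\bigcup_{j\le q_k}T^{-j}(\partial U)$, and the sets $P_k$ increase with $k$; any point of $\bigcup_j T^{-j}(\partial U)$ lies in $P_k$, hence in the bad neighborhood, for all large $k$, no matter how small you make the neighborhoods, so the $\limsup$ of the bad sets always contains a dense set. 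Thus you cannot arrange that every $x$ is ``clean'' (i.e.\ has $\mathcal E_k=0$) infinitely often, and your main-term formula with $|\widetilde F_{q_k}|\ge 1$ does not by itself rule out cancellation, since $|\mathcal E_k|$ can be as large as $2B$. The second suggestion (vary $N\in(q_k,q_{k+1}]$; ``a cancellation cannot persist'') is the right idea but is left as a heuristic precisely at the decisive step.

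What the paper does to make this rigorous, and what your sketch is missing, is a quantitative pigeonhole that is uniform both in $x$ and in the as-yet-undetermined tail of $\alpha$. Since each endpoint of $U$ is crossed at most once during the $a_{k+1}$ sweeps, the block sums $s(\sigma_l)$ change value at most $2B$ times (Lemma \ref{changes}), for every $x$. One then flips signs blockwise to get an auxiliary sequence that gains $+1$ per block, applies a summation-by-parts bound (Corollary \ref{sumtool}) whose threshold $N^*$ depends only on $L=1/q_k$, $\kappa=q_{k-1}$, $N_1=q_k^2$ and $A=2B+1$ --- crucially \emph{not} on the actual sign pattern, which is unknowable when $a_{k+1}$ is being chosen --- and picks $a_{k+1}q_k>N^*$. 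Splitting $[q_{k-1}+1,q_{k+1}]$ into the at most $2B+1$ runs on which $s(\sigma_l)$ is constant, the flipped sum exceeds $2B+1$, so some run of the \emph{actual} sum has absolute value at least $1$; that run is the Cauchy block, valid for every $x$. Your proposal would be repaired by making your ``cancellation cannot persist'' step into exactly this kind of run decomposition with an explicit, sequence-independent choice of $a_{k+1}$ (e.g.\ $\log a_{k+1}$ large compared with $(2B+1)q_k$); without it, your argument proves divergence only for almost every $x$, which is weaker than the theorem.
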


We will call functions like $f$ \emph{mean-zero indicator functions} for a finite union of intervals. Given $\alpha$, the reader can easily construct a mean-zero indicator function for a \emph{countable} union of intervals for which divergence everywhere will fail.

Previous work on this problem has mainly used tools from functional analysis, and has produced results for almost every $x$. Some exceptions to this can be found in \cite{fs} and \cite{Berkes_Weber}, but in each case some regularity or $f$ or additional assumptions on its Fourier series are required. A key difference in Theorem \ref{divergent theorem} -- and our subsequent theorems -- is that we prove divergence for all $x$.

There are several straightforward consequences of the proof of Theorem \ref{divergent theorem}. First, the set of divergent $\alpha$ is dense. Second, $\alpha$ can be taken to depend on $f$ only through the number of intervals in $U$. Third, one can replace $\frac{1}{n}$ with any sequence $b_n$ such that $\sum n(b_{n+1}-b_n)$ diverges.

In Section \ref{Liouvillesec}, we investigate the set of divergent $\alpha$, for functions of the type described above. As noted previously, this is a Lebesgue measure zero set; we obtain the following stronger result, for $b_n=\frac{1}{n}$:

\begin{mainthm}\label{thm2}
Any divergent $\alpha$ is a Liouville number. Hence the set of divergent $\alpha$ has Hausdorff dimension zero.
\end{mainthm}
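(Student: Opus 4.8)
The plan is to establish the contrapositive of the first assertion in the strongest possible form: \emph{if $\alpha$ is not a Liouville number, then \eqref{main_eqn} converges for every $x$}, so in particular $\alpha$ is not divergent. Write $\alpha=[0;a_1,a_2,\dots]$ with convergents $p_n/q_n$, and recall $\frac{1}{q_n+q_{n+1}}<\|q_n\alpha\|<\frac1{q_{n+1}}$. Being non‑Liouville is the same as being Diophantine of some finite exponent, which in continued‑fraction terms means a bound $q_{n+1}\le C_\alpha q_n^{A}$ for some $A=A_\alpha\ge 1$, hence $a_{n+1}\le C_\alpha q_n^{A-1}$. Fix $f=2\chi_U-1$; since $U$ is a finite union of intervals, $f$ has finite total variation $V:=\mathrm{Var}(f)$ and $\int f=0$. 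Put $Q_n(x):=\sum_{m=1}^{n}f(x+m\alpha)$.

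The heart of the matter is a uniform sublinear bound on the Birkhoff sums: $\sup_x|Q_n(x)|=O_\alpha\!\bigl(n^{1-\delta}\bigr)$ for some $\delta=\delta_\alpha>0$. First I would invoke the Denjoy--Koksma inequality: for every convergent denominator $q_k$ and every $y$, $\bigl|\sum_{m=0}^{q_k-1}f(y+m\alpha)\bigr|\le V$. Next I would write $n$ in its Ostrowski expansion $n=\sum_i c_{i+1}q_i$ with $0\le c_{i+1}\le a_{i+1}$ (so also $c_{i+1}\le n/q_i$), decompose the orbit arc $\{x+m\alpha:1\le m\le n\}$ into $\sum_i c_{i+1}$ consecutive blocks whose lengths lie in $\{q_i\}$, and apply Denjoy--Koksma to each block, getting $|Q_n(x)|\le V\sum_i c_{i+1}$ for all $x$. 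A short optimization finishes it: bounding $c_{i+1}\le\min\{C_\alpha q_i^{A-1},\,n/q_i\}$, splitting the sum over $i$ at the denominator where the two estimates cross, and using $q_{i+2}\ge 2q_i$ to sum the resulting geometric‑type series, one obtains $\sum_i c_{i+1}\lesssim_\alpha n^{1-1/A}$ (with an extra logarithm only in the degenerate case $A=1$, where the bound is anyway $O(\log n)$). Thus $\delta=1/A$ works.

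With this estimate, convergence is immediate from Abel summation:
\[
\sum_{n=1}^{N}\frac{f(x+n\alpha)}{n}=\frac{Q_N(x)}{N}+\sum_{n=1}^{N-1}\frac{Q_n(x)}{n(n+1)} ,
\]
whose first term tends to $0$ uniformly in $x$ because $Q_N(x)=O_\alpha(N^{1-\delta})$, and whose series converges absolutely and uniformly in $x$ because $|Q_n(x)|/\bigl(n(n+1)\bigr)=O_\alpha\!\bigl(n^{-1-\delta}\bigr)$; hence \eqref{main_eqn} converges for every $x$. This proves the first sentence. The second is then routine: the divergent set lies inside the set $\mathcal L$ of Liouville numbers, and $\mathcal L\cap(0,1)\subseteq\bigcap_{m\ge2}\bigcup_{q\ge2}\bigcup_{p=0}^{q}\bigl(\tfrac pq-q^{-m},\tfrac pq+q^{-m}\bigr)$; for any fixed $s>0$, covering the $m$‑th set by these at most $2q$ intervals of length $2q^{-m}$ for each $q$ bounds the $s$‑dimensional Hausdorff content of $\mathcal L$ by $2^{1+s}\sum_{q\ge2}q^{\,1-ms}\le \frac{2^{1+s}}{ms-2}\to 0$ as $m\to\infty$, so $\mathcal H^s(\mathcal L)=0$ for every $s>0$ and $\dim_H\mathcal L=0$. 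The only genuine work is the digit count in the second paragraph, which must be arranged to absorb arbitrarily large $A$: a cruder estimate such as $|Q_n(x)|\lesssim (\log n)\max_i a_{i+1}$ only yields $o(n)$ for $A<2$ and would not suffice. (This argument also clarifies why the $\alpha$ produced in Theorem~\ref{divergent theorem} must be Liouville.)
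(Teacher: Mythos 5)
Your argument is correct, but it takes a different route from the paper's main proof. The paper proves Theorem~2 by decomposing the sequence $(f\circ T^n(0))$ into countably many near-alternating subsequences (Proposition~\ref{decomp2}), bounding partial sums of each via the alternating-series estimate and the index bounds of Proposition~\ref{index}, and summing the bounds using $q_{k+1}<q_k^{v}$; you instead prove the contrapositive by establishing a uniform sublinear bound $\sup_x|Q_n(x)|=O(n^{1-1/A})$ on the Birkhoff sums via the Ostrowski expansion of $n$, the Denjoy--Koksma inequality applied blockwise, and the non-Liouville bound $q_{i+1}\lesssim q_i^{A}$, and then conclude by Abel summation. This is essentially the skeleton of the paper's Appendix~\ref{appendix} proof (the Kakutani--Petersen remark), except that you derive the discrepancy-type estimate from scratch rather than quoting the bound $D_N(\alpha)=O(N^{-1/\eta+\epsilon})$ for numbers of finite type, and you prove $\dim_H(\mathcal L)=0$ by a direct covering computation rather than citing Jarn\'ik--Besicovitch; your digit-counting step (splitting the Ostrowski sum at $q_i\approx n^{1/A}$ and using $q_{i+2}\ge 2q_i$) is sound, and your version has the merit of being self-contained and of giving convergence uniformly in $x$. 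What it does not give, and what the paper's longer decomposition proof is designed to give, is machinery that survives when $\alpha$ is Liouville: in Section~\ref{conv Liou} the paper reuses the summability criterion $\sum_k k\log q_k/q_{k-3}<\infty$ from its main proof to exhibit convergent Liouville numbers, whereas your estimate degenerates as $A\to\infty$ and yields nothing in that regime.
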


This theorem was previously known. In \cite{pk}, Kakutani and Petersen note that convergence holds for non-Liouville $\alpha$ when $f$ is the (mean-zero) indicator function of an interval. They remark that this result follows from number-theoretic results on the discrepancy for a non-Liouville number. As they do not detail the proof and as we have been unable to find it elsewhere in the literature, we include it in Appendix \ref{appendix}. This proof does give convergence for all $x$.

Our proof of Theorem \ref{thm2} is longer, but it does have the advantage that the proof provides a way to glean some information about the sum behavior of the Liouville numbers. For instance, we can prove the following theorem in Section \ref{conv Liou}, which does not seem to follow from the type of arguments outlined in Appendix \ref{appendix}:


\begin{mainthm}
There are convergent Liouville numbers.
\end{mainthm}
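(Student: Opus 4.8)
The goal is to exhibit a single Liouville $\alpha$ together with (implicitly) the mean-zero indicator function $f=2\chi_U-1$ for which $\sum f\circ T^n(x)\frac1n$ converges for every $x$.  The natural strategy is to run the machinery behind Theorem~\ref{divergent theorem} in reverse: that proof must organize the partial sums around the continued-fraction denominators $q_k$ of $\alpha$, and the divergence there comes from choosing the $q_k$ to grow fast enough (making $\alpha$ Liouville) that the ``bad'' contributions near each scale $q_k$ add up.  To get convergence instead, I would keep $\alpha$ Liouville — so the $q_k$ still grow superpolynomially — but arrange a \emph{competition}: choose $f$ (equivalently $U$) and the growth rate of $q_{k+1}$ relative to $q_k$ so that at each scale the net oscillation of the Hilbert-transform partial sums over the block $[q_k,q_{k+1})$ is summable in $k$.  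Concretely, I expect one controls a sum like $\sum_k (\text{something involving } \|f\|, q_k, q_{k+1})$ and the freedom is: (i) $f$ is essentially free (any mean-zero indicator of a finite union of intervals), and (ii) we may take $q_{k+1}$ as large as we like relative to $q_1,\dots,q_k$, which is exactly what forces $\alpha$ to be Liouville.

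**Key steps, in order.**  (1)~Extract from the proof of Theorem~\ref{divergent theorem} (Section~\ref{sec:divergent}) the quantitative estimate that controls the increment of the partial sums $\sum_{n=q_k}^{q_{k+1}} f\circ T^n(x)\frac1n$ in terms of the discrepancy / three-distance structure of the orbit at scale $q_k$; this should be a bound of the form $C(f)\cdot g(q_k,q_{k+1})$ uniform in $x$, where the same $g$ that the divergence proof drives to infinity we now want to make small.  (2)~Using Abel summation, rewrite $\sum f\circ T^n(x)\frac1n$ as $\sum_n (S_n(x)-S_{n-1}(x))\frac1n$ where $S_n$ are Birkhoff sums, and combine with the classical fact that for a mean-zero indicator function the Birkhoff sums $S_{q_k}(x)$ stay uniformly bounded (bounded remainder sets / the three-distance theorem), together with the telescoping control of $b_{n+1}-b_n=O(n^{-2})$.  (3)~Build $\alpha$ inductively: having chosen $q_1<\cdots<q_k$, pick $q_{k+1}$ so large that (a)~$q_{k+1}>q_k^{\,k}$ (guaranteeing the Liouville property, cf.\ Theorem~\ref{thm2}) and simultaneously (b)~the scale-$k$ contribution is $\le 2^{-k}$; the point is that enlarging $q_{k+1}$ only \emph{helps} the convergence estimate (the relevant terms carry a $1/q_{k+1}$ or a $\log$ ratio that we can dominate), so both demands are compatible.  (4)~Conclude that $\sum_{k}\sup_x\big|\sum_{n=q_k}^{q_{k+1}} f\circ T^n(x)\frac1n\big|<\infty$, hence the full series converges uniformly in $x$, and $\alpha$ is Liouville by construction.

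**Main obstacle.**  The delicate point is step~(3b): I need the per-scale estimate to actually \emph{improve} (or at least not deteriorate) as $q_{k+1}\to\infty$ with $q_1,\dots,q_k$ fixed.  In the ergodic-Hilbert-transform setting the dangerous terms are those with $n$ just past a denominator, where $\|n\alpha\|$ is tiny and $f\circ T^n$ can be systematically one-signed over a run of length $\sim q_{k+1}/q_k$; the weight $1/n\sim 1/q_k$ on that run could contribute $\sim (q_{k+1}/q_k)\cdot(1/q_k)$, which is \emph{bad}.  The resolution — and the real content of the proof — is that for a mean-zero indicator set this one-signed run is itself canceled by the complementary run (the orbit spends comparable time in $U$ and in $U^c$ at every scale, by the three-distance theorem applied to the endpoints of the intervals of $U$), so the net increment is governed by a \emph{boundary} term of size $O(q_k/q_{k+1})$ plus a $\log$-type term; this is what one must make precise, and it is where the specific structure ``$f$ = mean-zero indicator of finitely many intervals, $m(U)=1/2$'' is used.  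Everything else — the Abel summation, the inductive choice of $q_{k+1}$, verifying the Liouville condition — is routine once this cancellation estimate is in hand.
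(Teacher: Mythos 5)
Your overall skeleton---build $\alpha$ inductively so that $q_{k+1}$ sits just beyond $q_k^{\,k}$ and show that the contribution of each block $[q_k,q_{k+1})$ to the series is summable in $k$, uniformly in $x$---is the right one, and it matches what the paper does in Section \ref{conv Liou} (there $a_{k+1}=q_k^{k-1}$, so $q_k^{\,k}\le q_{k+1}\le 2q_k^{\,k}$). But your decisive step (3) rests on a false monotonicity claim: it is not true that ``enlarging $q_{k+1}$ only helps the convergence estimate.'' The correct per-block bound, obtained from the Denjoy--Koksma estimate (Corollary \ref{sumbound}) by cutting $[q_k,q_{k+1})$ into windows of length $q_k$, is of order $4B\,\log(q_{k+1}/q_k)/q_k$; it grows without bound as $q_{k+1}\to\infty$ with $q_k$ fixed, and indeed the entire content of Theorem \ref{divergent theorem} is that taking the partial quotients $a_{k+1}$ enormously large produces divergence for every $x$. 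So condition (3b) is not satisfied by ``any sufficiently large'' $q_{k+1}$, and picking $q_{k+1}$ arbitrarily large can land you on a divergent Liouville number. What actually makes the theorem work is a quantitative comparison your sketch never makes: the Liouville property costs only $\log q_{k+1}>k\log q_k$, while the per-block cost is roughly $\log q_{k+1}/q_k$, so one must choose $q_{k+1}$ in a controlled window (e.g.\ $q_k^{\,k}<q_{k+1}\le 2q_k^{\,k}$) and then verify summability of $\sum_k k\log q_k/q_k$ using the exponential growth of $q_k$; the paper checks exactly this kind of condition (summability of $k\log q_k/q_{k-3}$, resp.\ $k\log q_k/q_{k-1}$).

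Two further points. First, the estimate you defer as the ``main obstacle'' is the heart of the matter and is not supplied: your step (2) heuristic that the Birkhoff sums of a mean-zero indicator are ``uniformly bounded (bounded remainder sets)'' is only true at the times $q_k$ (Denjoy--Koksma); such a $U$ is in general not a bounded remainder set, and for arbitrary $n$ in the block one needs something like the Ostrowski-type bound $|S_n(x)|\le 4B\bigl(n/q_k+\sum_{j<k}a_{j+1}\bigr)$ before Abel summation gives the $\log(q_{k+1}/q_k)/q_k$ bound; your claimed form ``boundary term $O(q_k/q_{k+1})$ plus a log-type term'' is not substantiated, and its dependence on $q_{k+1}$ is precisely what you mis-assess above. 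Second, for the convergence engine the paper does not use Abel summation on Birkhoff sums at all: it re-runs the decomposition into near-alternating subsequences from the proof of Theorem \ref{thm2} (Proposition \ref{decomp2}, Lemmas \ref{linearbound1}, \ref{linearbound2}, and \ref{funnysumtool}), observing that the non-Liouville hypothesis entered only through the bound $q_i<q_{i-3}^{v^3}$ on $\log q_i$, which the explicit choice $a_{k+1}=q_k^{k-1}$ replaces. Your Abel--Denjoy--Koksma route could be made rigorous and would be somewhat more elementary, but as written the compatibility of ``Liouville'' with ``convergent''---the whole point of the statement---is asserted rather than proved.
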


We show specifically how to produce such numbers using the mechanisms developed in our proof of Theorem \ref{thm2}.

\subsection{Acknowledgements}

We would like to thank Adam Fieldsteel and Randy Linder for bringing this problem to our attention. We would also like to thank Jon Chaika, Felipe Ram\'irez, David Ralston, Vitali Bergelson, and Joseph Rosenblatt for helpful conversations and comments.


\section{Setup}

We fix the following notation throughout the paper:

\begin{itemize}
	\item  $\alpha \in (0,1)$ is an irrational number.
	
	\item We write $\alpha = [a_1 a_2 a_3 \ldots  ]$ for the continued fraction expansion of $\alpha$.  

	\item $S[a_1\ldots a_n]$ is the set of all irrational $\alpha$ with a continued fraction expansion beginning with $[a_1\ldots a_n]$.
	
	\item $\frac{p_n}{q_n} = [a_1 \ldots a_n]$ is the $n^{th}$ convergent to $\alpha$. The $q_n$ can be determined from the $a_n$ via the recurrence relation
\[ q_n=a_nq_{n-1}+q_{n-2}, \quad q_1:=a_1,\quad  q_0:=1. \]

	\item $T=R_\alpha$. 
	
	\item $\langle \langle x \rangle\rangle$ denotes the distance from $x$ to $0$ in $S^1$.


	\item $U=\cup_{l=1}^B I_l$ is a finite union of intervals in $S^1$.

	\item For a fixed $x \in S^1$ and for integers $j_1 \leq j_2$, let 
	\[\cO_{\alpha}[j_1, j_2] = \left\{R_{\alpha}^i(x) : j_1 \leq i \leq j_2 \right\}.\] 
	We call $\cO_{\alpha}[j_1, j_2]$ an \emph{orbit segment} with \emph{length} $j_2 - j_1 + 1$.

	\item 
	If $\cO_{\alpha}[j_1, j_2]$ is an orbit segment, let 
	\[s(\cO_{\alpha}[j_1, j_2]) = \sum_{i=j_1}^{j_2} f\circ T^i(x).\]

\end{itemize}

The following basic facts relate the continued fraction expansion of $\alpha$ to the dynamics of $T$:

\begin{itemize}
	\item For $n$ odd, $T^{q_n}(0) = q_n \alpha$ is closer to 1 than to 0, and for $n$ even, $T^{q_n}(0)$ is closer to 0 than to 1.  In other words, the $n^{th}$ convergent to $\alpha$ is an overestimate for $n$ odd and an underestimate for $n$ even (see, e.g. \cite[Thm 8]{khinchin}).
	\item For irrational $\alpha$, $\langle\langle T^{q_n}0\rangle\rangle < \langle\langle T^{m}0\rangle\rangle$ for any $m<q_{n+1}$. In other words, the convergents are precisely the best approximations of the second kind to $\alpha$, i.e. $0<m\leq q_n$ and $\frac{l}{m}\neq\frac{p_n}{q_n}$ imply $|m\alpha-l|>|q_n\alpha-p_n|$ (see, e.g. \cite[Thms 16 \& 17]{khinchin}).
\end{itemize}


\section{Divergent $\alpha$ exist}\label{sec:divergent}

\subsection{The basic idea}

The proof of Theorem \ref{divergent theorem} is driven by a simple idea. The convergent $\frac{p_n}{q_n}$ is the best rational approximation of the second kind for $\alpha$ with denominator less than $q_{n+1}=a_{n+1}q_n+q_{n-1}$.  If $a_{n+1}$ is quite large, then this approximation must be quite good (i.e. $\langle\langle q_n\alpha \rangle\rangle$ is quite small) and the orbits of $x$ under $T$ and $R_{p_n/q_n}$ track closely for a long time. The orbit of $x$ under $R_{p_n/q_n}$ is periodic, hitting $q_n$ points. Suppose that $q_n$ is odd. Then for parity reasons, values of $f$ over the orbit of $x$ under the rational rotation include at least one more $+1$ than $-1$, or at least one more $-1$ than $+1$, over each $q_n$ steps. This constant rate of accumulation of extra $+1$'s or $-1$'s causes the sum for the rational rotation to diverge. With $a_{n+1}$ quite large, orbit of the irrational rotation tracks that of the rational rotation closely for a long time, and we will show that it must also accumulate extra $+1$'s or $-1$'s at a constant rate for a long stretch of orbit. This will drive divergence of the sum.


\subsection{Lemmas}

The proof of our first lemma is clear.

\begin{lemma}\label{notzero}
If $\cO_{\alpha}[j_1, j_2]$ is an orbit segment of odd length, then $s(\cO_{\alpha}[j_1, j_2])\neq 0.$
\end{lemma}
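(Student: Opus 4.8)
The statement is that an orbit segment of odd length has nonzero $f$-sum, where $f = 2\chi_U - 1$ takes only the values $+1$ and $-1$. My plan is simply to observe that the sum $s(\cO_{\alpha}[j_1,j_2])$ is a sum of $j_2 - j_1 + 1$ terms, each equal to $\pm 1$. If there are $p$ terms equal to $+1$ and $q$ terms equal to $-1$, then the sum equals $p - q$, while the length of the segment is $p + q = j_2 - j_1 + 1$. Since the length is odd, $p + q$ is odd, so $p$ and $q$ have opposite parities; in particular $p \neq q$, hence $p - q \neq 0$. Equivalently, $p - q \equiv p + q \pmod 2$, and $p+q$ odd forces $p - q$ odd, so $s(\cO_{\alpha}[j_1,j_2])$ is an odd integer and cannot be zero.

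There is essentially no obstacle here: the only thing being used is that $f$ is integer-valued with values of a single parity (odd), so any sum of an odd number of them is odd, and an odd integer is nonzero. I would just remark that because $f$ takes values in $\{-1, +1\}$, the quantity $s(\cO_{\alpha}[j_1,j_2])$ has the same parity as the number of summands, namely $j_2 - j_1 + 1$, which is odd by hypothesis; therefore $s(\cO_{\alpha}[j_1,j_2])$ is a nonzero (odd) integer. This matches the author's remark that "the proof of our first lemma is clear."
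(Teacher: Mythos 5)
Your parity argument is correct and is precisely the "clear" proof the paper has in mind: since $f$ takes values in $\{-1,+1\}$, the sum $s(\cO_{\alpha}[j_1,j_2])$ has the same parity as the number of summands $j_2-j_1+1$, hence is odd and nonzero. This matches the paper's (omitted) argument exactly.
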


Consider the orbit segment $\cO_{\alpha}[1,q_{n+1}]$. We decompose it into segments $\sigma_0 = \cO_{\alpha}[1,q_{n-1}]$ and $\sigma_l=\cO_{\alpha}[q_{n-1}+(l-1)q_n+1, q_{n-1}+lq_n]$ for $l=1,2,\ldots, a_{n+1}$. Note that the parity of the orbit segment $\sigma_l$ depends on $\alpha$ only through the value of $q_n$, i.e. only through the values of the first $n$ terms in the continued fraction expansion of $\alpha$.

\begin{lemma}\label{changes}
Let $C=\{ l\in [1,a_{n+1}-1]: s(\sigma_l) \neq s(\sigma_{l+1})\}$. i.e. $C$ is the set of $l$ at which $s(\sigma_l)$ changes. Then $|C|\leq 2B$, where $B$ is the number of intervals forming $U$.
\end{lemma}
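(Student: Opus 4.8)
The plan is to analyze how $s(\sigma_l)$ can change as $l$ runs from $1$ to $a_{n+1}$. Each $\sigma_l$ is the orbit segment $\cO_{\alpha}[q_{n-1}+(l-1)q_n+1, q_{n-1}+lq_n]$, i.e. a block of exactly $q_n$ consecutive points of the orbit of $x$. The key observation is that $\sigma_{l+1}$ is obtained from $\sigma_l$ by applying $T^{q_n}$ to every point; since $\langle\langle q_n\alpha\rangle\rangle$ is tiny when $l$ is much smaller than $a_{n+1}$, the points of $\sigma_{l+1}$ are small perturbations of those of $\sigma_l$. Concretely, write $\sigma_l = \{y_1, \ldots, y_{q_n}\}$ with $y_{k+1} = T y_k$; then $\sigma_{l+1} = \{y_1 + \delta, \ldots, y_{q_n} + \delta\}$ where $\delta = q_n\alpha \pmod 1$ with $\langle\langle \delta\rangle\rangle = \langle\langle q_n\alpha\rangle\rangle$. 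Therefore $s(\sigma_{l+1}) - s(\sigma_l) = \sum_{k=1}^{q_n} \big(f(y_k+\delta) - f(y_k)\big)$, and since $f = 2\chi_U - 1$, each summand is nonzero only for those $k$ where the short arc between $y_k$ and $y_k + \delta$ straddles an endpoint of $U$.

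The next step is to bound, summed over all $l \in [1, a_{n+1}-1]$, the number of indices $k$ for which such straddling occurs — this controls $\sum_l |s(\sigma_{l+1}) - s(\sigma_l)|$, hence $|C|$. Here I would use that the union of the arcs traversed as we go from $\sigma_1$ all the way to $\sigma_{a_{n+1}}$ is, for each fixed base point, essentially a single arc of total length at most $(a_{n+1}-1)\langle\langle q_n\alpha\rangle\rangle < q_{n+1}\langle\langle q_n\alpha\rangle\rangle < 1$ — in fact, the point $y_k$ at step $l$ is $T^{(l-1)q_n}$ applied to the step-$1$ version, and these successive translates by $\delta$ march monotonically (since $q_n\alpha - p_n$ has a fixed sign) across an arc of length less than $1$ before they could wrap around. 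So for each of the $q_n$ "columns" indexed by $k$, the points $y_k^{(1)}, y_k^{(2)}, \ldots$ sweep out a monotone arc of length $< 1$, and a given endpoint of $U$ is crossed at most once per column. With $B$ intervals there are $2B$ endpoints, so across all columns and all $l$ there are at most $2B$ crossings in total. Each crossing changes the running sum by $\pm 2$ and is the only way $s(\sigma_l)$ can change, so there are at most $2B$ values of $l$ at which $s(\sigma_l) \neq s(\sigma_{l+1})$, giving $|C| \le 2B$.

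The main obstacle is making the "monotone sweep of length less than $1$" argument precise and ruling out wrap-around double-counting: I need that for $l$ ranging over $[1, a_{n+1}]$, the translates $\{(l-1)q_n\alpha\}$ (or rather their effect on each base point $y_k^{(1)}$) do not wrap all the way around $S^1$, which follows because $(a_{n+1}-1)\langle\langle q_n\alpha\rangle\rangle \le (q_{n+1}/q_n - 1)\langle\langle q_n\alpha\rangle\rangle$ and $q_{n+1}\langle\langle q_n\alpha\rangle\rangle < 1$ by the standard continued-fraction estimate $\langle\langle q_n\alpha\rangle\rangle < 1/q_{n+1}$. A secondary subtlety is bookkeeping at the "seam" between $\sigma_0$ and $\sigma_1$ and the precise alignment of which point of $\sigma_{l+1}$ corresponds to which point of $\sigma_l$, but since $\sigma_{l+1} = T^{q_n}\sigma_l$ as ordered tuples this is just a relabeling and causes no trouble. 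Once the counting is set up correctly, the conclusion $|C| \le 2B$ is immediate.
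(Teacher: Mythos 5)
There is a genuine gap, and it sits exactly at the step you pass over most quickly. Your setup is fine: $\sigma_{l+1}=R_{\pm\langle\langle q_n\alpha\rangle\rangle}\sigma_l$, a change $s(\sigma_l)\neq s(\sigma_{l+1})$ forces some orbit point to cross an endpoint of $U$ at step $l$, and within a fixed ``column'' the translates march monotonically so that a given endpoint is crossed at most once \emph{per column}. But from ``at most once per column'' you conclude ``at most $2B$ crossings across all columns and all $l$,'' which does not follow: there are $q_n$ columns, so this reasoning only yields at most $2Bq_n$ crossings, hence only $|C|\leq 2Bq_n$, far weaker than the claimed $|C|\leq 2B$. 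What is needed is that for each of the $2B$ endpoints $z$, at most \emph{one} column can ever cross $z$ during the entire passage $l=1,\dots,a_{n+1}$, and nothing in your argument rules out many different columns crossing the same endpoint at many different values of $l$. Your ``total sweep of length $<1$'' estimate (which you flag as the main obstacle) only prevents a single column from wrapping around and recrossing $z$; it says nothing about distinct columns.

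The missing ingredient is the spacing of the points of $\sigma_1$: since $\sigma_1$ is an orbit segment of length $q_n$, the best-approximation property gives that the minimum distance between its points is $\langle\langle q_{n-1}\alpha\rangle\rangle$, and the identity $\langle\langle q_{n-1}\alpha\rangle\rangle=a_{n+1}\langle\langle q_n\alpha\rangle\rangle+\langle\langle q_{n+1}\alpha\rangle\rangle$ gives $\langle\langle q_{n-1}\alpha\rangle\rangle>a_{n+1}\langle\langle q_n\alpha\rangle\rangle$. Hence the total arc swept by any one column over all $a_{n+1}$ rotations is shorter than the gap between neighboring columns, so the swept arcs are pairwise disjoint, and each endpoint of $U$ lies in at most one of them; it is therefore crossed by at most one orbit point, at most once, over the whole range of $l$. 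With this fact your counting does give at most $2B$ crossings in total and hence $|C|\leq 2B$; this spacing comparison is precisely the point on which the paper's proof turns, and without it the stated bound cannot be reached.
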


\begin{proof}

First, $\sigma_{l+1} = R_{q_n\alpha}\sigma_l = R_{\pm\langle\langle q_n\alpha\rangle\rangle} \sigma_l$ with sign depending on whether $\frac{p_n}{q_n}$ over- or underestimates $\alpha$. As $\sigma_1$ is an orbit segment of length $q_n$, the minimum distance between its points is $\langle\langle q_{n-1}\alpha \rangle\rangle > a_{n+1}\langle\langle q_n\alpha \rangle\rangle$. Thus, if $z$ is an endpoint of some interval in $U=\cup_{l=1}^B I_l$, over the $a_{n+1}$ rotations of $\sigma_1$ by $R_{\pm\langle\langle q_n\alpha\rangle\rangle}$, each of which moves points by $\langle \langle q_n \alpha \rangle\rangle$, at most one orbit point can cross $z$. Moreover, $s(\sigma_l)\neq s(\sigma_{l+1})$ only if an orbit point crosses the endpoint of an interval under the rotation $R_{\pm\langle\langle q_n\alpha\rangle\rangle}$ of $\sigma_l$. As there are $2B$ endpoints, and each is crossed at most once, $|C|\leq 2B$.
\end{proof}

We also need the following technical lemma and its corollary.

\begin{lemma}\label{partslemma}
Suppose that $c_m$ is a sequence of $\pm1$'s. Let $s_n=\sum_{m=1}^n c_m$ and suppose that $\frac{s_n}{n}\to L \neq 0$. Then for any $\kappa>0$, $\sum c_m\frac{1}{m+\kappa}$ diverges.
\end{lemma}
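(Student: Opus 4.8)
The plan is to use summation by parts (Abel summation) to compare $\sum c_m\frac{1}{m+\kappa}$ with the sequence of partial sums $s_n$. Writing $\sum_{m=1}^{N} c_m\frac{1}{m+\kappa}$ and applying Abel summation, we get
\[
\sum_{m=1}^{N} \frac{c_m}{m+\kappa} = \frac{s_N}{N+\kappa} + \sum_{m=1}^{N-1} s_m\left(\frac{1}{m+\kappa} - \frac{1}{m+1+\kappa}\right).
\]
The first term $\frac{s_N}{N+\kappa}$ converges to $L$ as $N\to\infty$ since $s_N/N\to L$. So the convergence of the left side is equivalent to the convergence of $\sum_{m\geq 1} s_m\left(\frac{1}{m+\kappa}-\frac{1}{m+1+\kappa}\right)$. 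The key observation is that $\frac{1}{m+\kappa}-\frac{1}{m+1+\kappa} = \frac{1}{(m+\kappa)(m+1+\kappa)}$ is asymptotically $\frac{1}{m^2}$, and $s_m \sim Lm$, so the general term of this series is asymptotically $\frac{L}{m}$, which does not go to zero fast enough: the series diverges.

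To make the divergence rigorous, I would fix $\varepsilon = |L|/2$ and choose $M$ large enough that $|s_m - Lm| < \varepsilon m$ for all $m \geq M$; then for $m\geq M$, $s_m$ has the same sign as $L$ and $|s_m| \geq |L|m/2$. Since $\frac{1}{(m+\kappa)(m+1+\kappa)} \geq \frac{1}{(m+1+\kappa)^2}$, and for $m$ large this is at least $\frac{c}{m^2}$ for some constant $c>0$ depending on $\kappa$ (e.g.\ once $m \geq 1+\kappa$ we have $m+1+\kappa \leq 2m$, so the quantity is at least $\frac{1}{4m^2}$), the tail $\sum_{m\geq M'} s_m\left(\frac{1}{m+\kappa}-\frac{1}{m+1+\kappa}\right)$ has all terms of the same sign (that of $L$) and is bounded below in absolute value by $\sum_{m\geq M'} \frac{|L|}{8m}$, which diverges. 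Hence $\sum_{m=1}^{N-1} s_m\left(\frac{1}{m+\kappa}-\frac{1}{m+1+\kappa}\right)$ diverges (to $+\infty$ or $-\infty$ according to the sign of $L$), and therefore so does $\sum c_m\frac{1}{m+\kappa}$.

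I do not anticipate a serious obstacle here; the only mild care needed is in handling the parameter $\kappa$ uniformly (ensuring the comparison constants do not degenerate) and in confirming that the boundary term $\frac{s_N}{N+\kappa}$ is genuinely bounded (indeed convergent), so that it cannot cancel the divergence of the main sum. Both points are routine once $M$ is chosen past the threshold where $s_m/m$ is close to $L$ and $m$ dominates $\kappa$.
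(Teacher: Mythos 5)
Your proposal is correct and follows essentially the same route as the paper: summation by parts, observing that the boundary term $\frac{s_N}{N+\kappa}$ converges to $L$, and then comparing the remaining sum $\sum s_m\bigl(\frac{1}{m+\kappa}-\frac{1}{m+1+\kappa}\bigr)$, whose terms are eventually of one sign and of size at least a constant times $\frac{|L|}{m}$, with the harmonic series. The only differences are minor bookkeeping choices in the Abel summation and in the explicit constants ($|L|/2$ versus the paper's $L/3$), which do not affect the argument.
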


\begin{proof}
By the summation by parts formula, 

\begin{align}
	\sum_{m=1}^N c_m\frac{1}{m+\kappa}= & \frac{s_N}{N+\kappa+1}-\sum_{m=1}^Ns_m(\frac{1}{m+\kappa+1}-\frac{1}{m+\kappa}) \nonumber \\
		=& \frac{s_N}{N+\kappa+1}+\sum_{m=1}^N \frac{s_m}{m+\kappa}\frac{1}{m+\kappa+1}. 
		\label{partssum}
\end{align}
Pick $N_1$ so large that for all $m>N_1$, $\frac{s_m}{m+\kappa}\geq\frac{L}{3}$ if $L>0$, or $\frac{s_m}{m+\kappa}\leq\frac{L}{3}$ if $L<0$. Then for all $m>N_1$, $\frac{s_m}{m+\kappa}\frac{1}{m+\kappa+1}$ is at least $\frac{L}{3}\frac{1}{m+\kappa+1}$ if $L>0$, and at most $\frac{L}{3}\frac{1}{m+\kappa+1}$ if $L<0$. In either case, the comparison sums diverge, so by making $N$ sufficiently large, the term $\sum_{m=1}^N\frac{s_m}{m+\kappa}\frac{1}{m+\kappa+1}$ can be made arbitrarily large in absolute value. Since the term $\frac{s_N}{N+\kappa+1}$ approaches $L$, the expression in (\ref{partssum}) grows arbitrarily large in absolute value, showing divergence.
\end{proof}

The following is an immediate corollary of the above proof.

\begin{cor}\label{sumtool}
Fix $L \in \bR$ and $\kappa, N_1 \in \bN$. 
Suppose that $c_m$ is a sequence of $\pm1$'s such that the partial sums $s_n=\sum_{m=1}^n c_m$
satisfy $\frac{s_n}{n+\kappa}\geq \frac{L}{3}$ for all $n>N_1$ if $L>0$, or satisfy 
$\frac{s_n}{n+\kappa}\leq \frac{L}{3}$ for all $n> N_1$ if $L<0$.
Then for any $A > 0$ there is an $N^{*}$
such that $|\sum_{m=1}^{N^*}c_m\frac{1}{m+\kappa}|>A$, where $N^{*}$ depends only on $L$, $\kappa$, $N_1$, and $A$.
\end{cor}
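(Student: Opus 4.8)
The plan is to re-read the proof of Lemma \ref{partslemma} and extract its quantitative content, being careful that every constant produced along the way depends only on $L$, $\kappa$, $N_1$, and $A$, and never on the particular sequence $(c_m)$. I would treat the case $L>0$ in detail and dispose of $L<0$ by applying the same argument to the sequence $(-c_m)$, which replaces $L$ by $-L>0$ and reverses the hypothesised inequality; I would state this symmetry first.

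First I would invoke the summation-by-parts identity \eqref{partssum}, which is a purely algebraic identity valid for any $\pm1$-valued sequence:
\[
\sum_{m=1}^{N} c_m\frac{1}{m+\kappa} \;=\; \frac{s_N}{N+\kappa+1} \;+\; \sum_{m=1}^{N} \frac{s_m}{m+\kappa}\,\frac{1}{m+\kappa+1}.
\]
The term $\frac{s_N}{N+\kappa+1}$ lies in $[-1,1]$ for every $N$. I would split the remaining sum at $N_1$: for $1\le m\le N_1$ the trivial bound $|s_m|\le m$ gives $\bigl|\sum_{m=1}^{N_1}\frac{s_m}{m+\kappa}\frac{1}{m+\kappa+1}\bigr|\le \sum_{m=1}^{N_1}\frac{1}{m+\kappa+1}=:M_1$, a quantity depending only on $N_1$ and $\kappa$. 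For $m>N_1$ the hypothesis $\frac{s_m}{m+\kappa}\ge\frac{L}{3}$ makes each summand at least $\frac{L}{3(m+\kappa+1)}$, so for every $N>N_1$
\[
\sum_{m=1}^{N} c_m\frac{1}{m+\kappa} \;\ge\; -1 - M_1 + \frac{L}{3}\sum_{m=N_1+1}^{N}\frac{1}{m+\kappa+1}.
\]
Since the partial sums of the harmonic tail $\sum_{m=N_1+1}^{N}\frac{1}{m+\kappa+1}$ increase without bound, I would take the least $N^{*}>N_1$ for which $\frac{L}{3}\sum_{m=N_1+1}^{N^{*}}\frac{1}{m+\kappa+1} > A+1+M_1$; the previous display then gives $\sum_{m=1}^{N^{*}} c_m\frac{1}{m+\kappa} > A$, hence $\bigl|\sum_{m=1}^{N^{*}} c_m\frac{1}{m+\kappa}\bigr|>A$, as required.

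The only point needing care is the dependence claim for $N^{*}$, and this is bookkeeping rather than a genuine obstacle: $M_1$ is a function of $(N_1,\kappa)$ alone, the threshold $A+1+M_1$ is then a function of $(A,N_1,\kappa)$ alone, and the number of harmonic-tail terms needed to exceed that threshold is a function of $(L,N_1,\kappa,A)$ alone, so nothing in the construction of $N^{*}$ refers to $(c_m)$ beyond the standing hypotheses. This is precisely the feature the proof of Lemma \ref{partslemma} leaves implicit, since there $N_1$ itself is produced non-uniformly from the convergence $\frac{s_n}{n}\to L$, whereas here $N_1$ is supplied as data.
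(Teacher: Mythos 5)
Your proof is correct and follows essentially the same route as the paper's: the summation-by-parts identity \eqref{partssum}, a split of the sum at $N_1$ with the uniform bound $|s_m|\le m$ controlling the initial block, the hypothesis driving the divergent harmonic tail, and a choice of $N^{*}$ that visibly depends only on $L$, $\kappa$, $N_1$, $A$. The only (harmless) cosmetic differences are that you bound the boundary term $\frac{s_N}{N+\kappa+1}$ crudely by $-1$ rather than noting it shares the sign of $L$, and you handle $L<0$ by passing to $(-c_m)$ instead of running the two signs in parallel.
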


The crucial fact here is that $N^*$ can be determined by the values of $L$, $\kappa$, $N_1$, and $A$,
without dependence on the specific terms of the sequence $c_m$. This will play a key role in the proof of Theorem \ref{divergent theorem}.

\begin{proof}
We continue using the summation by parts formula in equation (\ref{partssum}). If $n > N_1$, then the terms $\frac{n}{n + \kappa + 1}$ and $\sum_{m=N_1 + 1}^n \frac{s_m}{m+\kappa}\frac{1}{m+\kappa+1}$ have the same sign as $L$, by our hypothesis on the 
sequence. On the other hand, the middle terms $\sum_{m=1}^{N_1} \frac{s_m}{m+\kappa}\frac{1}{m+\kappa+1}$ might have a different sign. Since $|s_m| \leq m$ for any
sequence $c_m$, we can bound this potential cancellation without dependence on the sequence. By our hypothesis on the sequence $c_m$, we can choose $n > N_1$ such that the magnitude of $\sum_{m=N_1 + 1}^n \frac{s_m}{m+\kappa}\frac{1}{m+\kappa+1}$ becomes arbitrarily large, without further assumptions on the sequence.

To be more concrete in our choices, let $E = \sum_{m=1}^{N_1} \frac{m}{m+\kappa}\frac{1}{m+\kappa+1}$. Since $|s_m| \leq m$, we have the bound $|\sum_{m=1}^{N_1} \frac{s_m}{m+\kappa}\frac{1}{m+\kappa+1}| \leq E$. 
Choose $N_2 \in \bN$ such that $\sum_{m=1}^{N_2}\frac{1}{m+\kappa+1} > (A+E)|3/L|$. Take $N^{*} \geq \max\left\{N_1, N_2 \right\}$. Since $N^{*}\geq N_1$, the term $\frac{N^*}{N^* + \kappa + 1}$ has the same sign and is larger in magnitude than $L/3$. Since $N^{*} \geq N_2$, we have 
\begin{eqnarray*}
\left|\sum_{m=1}^{N^*} \frac{s_m}{m+\kappa}\frac{1}{m+\kappa+1}\right| &\geq &
\left|\sum_{m=1}^{N^*} \frac{L}{3}\frac{1}{m+\kappa+1}\right|\\
 &\geq & A + E.
\end{eqnarray*}
Additionally, these two terms on the right side of equation (\ref{partssum}) have the same sign.
Hence, $|\sum_{m=1}^{N^*}c_m\frac{1}{m+\kappa}| > |L/3| + A > A$.
\end{proof}


\subsection{Proof of Theorem \ref{divergent theorem}}

Suppose $q_n$ is odd. By Lemma \ref{changes}, if $a_{n+1}$ is much larger than $2B$, then there will be a long stretch of indices $l$ during which $s(\sigma_l)$ does not change. Over this stretch, the sum accumulates extra $+1$'s or $-1$'s at a rate of at least $1/q_n$. This, via Corollary \ref{sumtool} will drive the divergence of the sum.

We will find $\alpha$ satisfying the requirements of Theorem \ref{divergent theorem} by inductively defining its continued fraction expansion. This requires some care in the sort of arguments we can make. Recall that $S[a_1\ldots a_n]$ is the set of all irrational $\alpha$ with a continued fraction expansion beginning with $[a_1\ldots a_n]$. To define $a_{n+1}$ in our inductive scheme, we must make arguments which rely only on fixed data (such as the value of $B$) and on statements which are true for all $\alpha\in S[a_1\ldots a_n]$. For example, since $\alpha$ is not yet known, we do not know the exact sign pattern $(f\circ T^nx)_n$. We must instead rely on information about it gleaned only from the first $n$ terms of the continued fraction expansion, such as Lemma \ref{notzero} (applied to $\sigma_l$) and Lemma \ref{changes}.

\begin{proof}[Proof of Theorem \ref{divergent theorem}.]
We define $\alpha$ by producing its continued fraction expansion inductively. We prove divergence via the Cauchy criterion, showing that for any $q_n$, there are $n_2>n_1>q_{n}$ such that $|\sum_{n=n_1}^{n_2} f\circ T^n(x)\frac{1}{n}|>1$.

Let $a_1=1$. Then $q_1 = 1$. By choosing all $a_n$ even for $n\geq 2$, we can ensure that all $q_n$ are odd, simplifying the proof below. Since the argument below proceeds by choosing $a_{n+1}$ sufficiently large, this causes no problems.

For some $n \geq 1$, suppose that we have chosen $a_1, \ldots, a_n$ with $a_1=1, a_n$ even for $n\geq 2$. Then $q_n$ is odd. Moreover, note that all $\alpha \in S[a_1 \dots a_n]$
have the same $q_n$ and $q_{n-1}$.

Let $\kappa = q_{n-1}$.
For any $\alpha \in S[a_1\ldots a_n]$, let $c_m'(\alpha) = f\circ T^{\kappa+m}(x)$. The sequence $c_m'(\alpha)$ depends, of course, on the value of $\alpha$, but for all $\alpha\in S[a_1\ldots a_n]$, it accumulates at least one extra $+1$ or $-1$ over every orbit segment $\sigma_l$, by Lemma \ref{notzero}. 
Now let

\begin{equation*}
	c_m(\alpha)=\left\{
	\begin{array}{rl}
		c_m'(\alpha) & \mbox{ if } m\in \sigma_l \mbox{ with } s(\sigma_l)>0 \\
		-c_m'(\alpha) & \mbox{ if } m\in \sigma_l \mbox{ with } s(\sigma_l)<0.
	\end{array}\right.
\end{equation*}
That is, we switch the signs of $c_m'(\alpha)$ on an orbit segment $\sigma_l$ precisely when necessary to ensure the sum of $c_m(\alpha)$ over the segment is positive. Now, for all $\alpha \in S[a_1\ldots a_n]$, $c_m(\alpha)$ accrues at least one extra $+1$ over each segment $\sigma_l$. 

Take $L = 1/q_n$ and $N_1 = q_n^2$. As noted, $\kappa = q_{n-1}$. 
Let $s_m = \sum_{i=1}^{m}c_m$. First, we check that the hypotheses of Corollary \ref{sumtool} hold at multiples of $q_n$ greater than $N_1$. Then, we check that the hypotheses of Corollary \ref{sumtool} hold between multiples of $q_n$. First, at each $m=lq_n$, $\frac{s_m}{m+\kappa}\geq \frac{l}{lq_n+q_{n-1}}$. If $m>N_1=q_n^2$, $\frac{s_m}{m+\kappa} >\frac{L}{3}$ since $q_n^2>q_{n-1}+1$.

Now suppose that $lq_n < m < (l+1)q_n$ with $l\geq q_n$. Then $s_m>l-\lfloor \frac{q_n}{2}\rfloor$ since the sum has accumulated at least $l$ extra $+1$'s and there are at most $\lfloor \frac{q_n}{2}\rfloor$ of the opposite signs between $lq_n$ and $(l+1)q_n$ which can cancel them out. From this we compute:

\begin{align}
	\frac{s_m}{m+\kappa}  >& \frac{l}{m+q_{n-1}} - \frac{\lfloor q_n/2 \rfloor}{m+q_{n-1}} \nonumber \\
	& \geq \frac{q_n}{m+q_{n-1}} -  \frac{\lfloor q_n/2 \rfloor}{m+q_{n-1}} \nonumber\\
	& \geq \frac{q_n}{2(m+q_{n-1})} \nonumber \\
	& \geq \frac{q_n}{2(q_n(q_n+1)+q_{n-1})} \nonumber \\
	& \geq \frac{1}{3q_n} \nonumber \\
	& = \frac{L}{3}. \nonumber
\end{align}
Therefore this choice of $\kappa$, $L$, and $N_1$ ensures that Corollary \ref{sumtool} holds for the sequence $c_m$ defined by any $\alpha \in S[a_1\ldots a_n]$.
Let $A = 2B + 1$, where $B$ is the number of intervals in $U$.
For this $A>0$, Corollary \ref{sumtool} provides $N^*$. Choose an even $a_{n+1}$ so that $a_{n+1}q_n > N^*$.

For $\alpha \in S[a_1\ldots a_n a_{n+1}]$, we now return to $c_m'(\alpha)$, the un-adjusted sign pattern of the sum $\sum f\circ T^m(x)\frac{1}{m}$. By Lemma \ref{changes}, there are at most $2B$ values of $l$ at which $s(\sigma_l)\neq s(\sigma_{l+1})$. Let $l_1^* < l_2^* < \cdots < l_b^*$ be the values of $l$ where $s(\sigma_l)$ changes. Let 
\begin{eqnarray*}
	\tau_0 &=& [1, q_{n-1} + l_1^* q_n], \\
	\tau_i &=& [q_{n-1} + l_i^* q_n + 1, q_{n-1} + l_{i+1}q_n] 
		\hspace{3 em} \text{ for } 1 \leq i < b, \text{ and } \\
	\tau_b &=& [q_{n-1} + l_b^* q_n + 1, q_{n+1}].
\end{eqnarray*}
If for all $0 \leq i \leq b$, $|\sum_{m\in\tau_i}f\circ T^m(x)\frac{1}{m}|<1$, then we arrive at a contradiction to the fact, established above, that $\sum_{m=q_{n-1}+1}^{q_{n+1}}c_m \frac{1}{m}>A=2B+1$. So there must be at least one $\tau_i$ such that $|\sum_{m\in\tau_i}f\circ T^m(x)\frac{1}{m}|\geq1$. Taking $n_1$ and $n_2$ as the first and last integers in $\tau_i$, we note that $n_2 > n_1 > q_{n-1}$.

Since $S[a_1 \ldots a_n]$ is a nested sequence of closed subsets of the circle,
the intersection is nonempty. Take $\alpha \in \cap_n S[a_1 \ldots a_n]$. Let 
$q_{n}$ be the denominator of the $n^{th}$ convergent to $\alpha$. The continued
fraction expansion of $\alpha$ implies that $q_n \rightarrow \infty$ as $n \rightarrow \infty$. For each $n$, $\alpha \in S[a_1 \cdots a_{n+2}]$, so there exist $n_2>n_1>q_{n}$ such that $|\sum_{n=n_1}^{n_2} f\circ T^n(x)\frac{1}{n}|>1$. 
Hence, the series $\sum f\circ T^n(x)\frac{1}{n}$ diverges by the Cauchy criterion.
\end{proof}

As mentioned in the introduction, our argument actually proves a stronger result:

\begin{cor}
Let $\mathcal{F}_B$ be the set of all $f=2\chi_U -1$ where $m(U)=1/2$ and $U$ can be written as a finite union of no more than $B$ intervals. Let $b_n$ be any sequence such that $\sum n(b_{n+1}-b_n)$ diverges. Then there is a dense, uncountable set of irrational $\alpha$ such that $\sum f\circ T^n(x) b_n$ diverges for any $f\in\mathcal{F}_B$ and any $x$.
\end{cor}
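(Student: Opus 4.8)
The plan is to re-examine the proof of Theorem \ref{divergent theorem} and observe that none of its quantitative inputs depended on the specific value of $\alpha$, only on $B$, and that the role of the harmonic weights $\frac{1}{n}$ was only used through the two facts that (i) summation by parts produces a telescoping term plus a sum with weights $b_{m+1}-b_m$, and (ii) $\sum_m m(b_{m+1}-b_m)$ diverges, which is exactly what makes the ``comparison sum'' in Lemma \ref{partslemma} and Corollary \ref{sumtool} diverge. So the first step is to restate Lemma \ref{partslemma} and Corollary \ref{sumtool} with $\frac{1}{m+\kappa}$ replaced by a general decreasing positive sequence $b_{m+\kappa}$ satisfying $\sum m(b_{m+1}-b_m)$ divergent (note $b_{m+1}-b_m$ may be taken negative, so one works with $|b_{m+1}-b_m|$; alternatively assume $b_n$ decreasing so that the sign is controlled). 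In the summation-by-parts identity $\sum_{m=1}^N c_m b_{m+\kappa} = s_N b_{N+\kappa+1} - \sum_{m=1}^N s_m(b_{m+\kappa+1}-b_{m+\kappa})$, the hypothesis $s_m/(m+\kappa)\geq L/3$ turns the tail of the second sum into something bounded below by a constant multiple of $\sum_m (m+\kappa)|b_{m+\kappa+1}-b_{m+\kappa}|$, which diverges; the boundary term $s_N b_{N+\kappa+1}$ stays bounded since $|s_N|\leq N$ and $N b_N \to$ a finite or at worst controlled limit (here one may want the mild extra assumption that $b_n\to 0$, or simply absorb $s_Nb_{N+\kappa+1}$ into the estimate since it is $O(Nb_N)$ and the main term grows without bound). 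This gives the generalized Corollary \ref{sumtool}: for any $A$ there is $N^*$ depending only on $L,\kappa,N_1,A$ and the sequence $(b_n)$ (which is now fixed data, not depending on $\alpha$) with $|\sum_{m=1}^{N^*} c_m b_{m+\kappa}| > A$.

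Second, I would rerun the proof of Theorem \ref{divergent theorem} verbatim, with $\frac{1}{n}$ replaced by $b_n$ throughout, using the generalized corollary in place of Corollary \ref{sumtool}. The combinatorial core --- Lemma \ref{notzero} (each $\sigma_l$ of odd length contributes a nonzero, hence $\pm1$-or-larger, net sum) and Lemma \ref{changes} ($|C|\leq 2B$, which used only the geometry of the rotation and the number of interval endpoints, not the weights) --- is untouched. The inductive construction of the continued fraction expansion is also untouched: we again force $a_1=1$ and all later $a_n$ even so that every $q_n$ is odd, set $\kappa = q_{n-1}$, $L=1/q_n$, $N_1=q_n^2$, $A=2B+1$, obtain $N^*$ from the generalized corollary, and pick an even $a_{n+1}$ with $a_{n+1}q_n > N^*$; the partition into $\tau_0,\dots,\tau_b$ and the pigeonhole argument producing $n_1,n_2$ with $|\sum_{m=n_1}^{n_2} f\circ T^m(x) b_m|\geq 1$ go through word for word. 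Crucially, since $B$ is the only $f$-dependent datum entering, the same $\alpha$ works simultaneously for every $f\in\mathcal F_B$, and since at each stage $a_{n+1}$ can be chosen to be any sufficiently large even integer, the resulting set of admissible $\alpha$ is uncountable (a Cantor-like set in the continued-fraction parameters) and its closure contains $S[a_1\ldots a_n]$ for every finite initial string we like, hence it is dense in $(0,1)\setminus\mathbb Q$, which is dense in $S^1$.

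The one place requiring genuine care --- and the step I expect to be the main obstacle --- is the generalized summation-by-parts estimate when $(b_n)$ is not monotone or not going to zero. If we only assume $\sum n(b_{n+1}-b_n)$ diverges, the signs of $b_{m+1}-b_m$ could in principle fluctuate, and then the lower bound $s_m(b_{m+\kappa+1}-b_{m+\kappa}) \gtrsim (m+\kappa)(b_{m+\kappa}-b_{m+\kappa+1})$ is not automatic from $s_m/(m+\kappa)\geq L/3$; one needs $s_m$ and $b_{m+1}-b_m$ to have a compatible sign, which the sign-adjustment trick ($c_m$ vs $c_m'$) in the proof was designed to give only for a fixed-sign weight increment. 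The clean fix is to note that divergence of $\sum n(b_{n+1}-b_n)$ with $b_n$ bounded forces (by Abel summation in the other direction, or directly) that $b_n$ is eventually monotone decreasing --- or, more simply, to phrase the corollary's hypothesis as ``$s_m/(m+\kappa)\geq L/3$'' together with ``$b_n$ decreasing and $\sum n(b_n - b_{n+1}) = \infty$'', which is exactly the natural reading and covers $b_n = 1/n$. If one genuinely wants the oscillatory case, one decomposes $b_{n+1}-b_n$ into positive and negative parts and observes that the divergence of $\sum n(b_{n+1}-b_n)$ must come from one part or a net drift, then argues on that subsequence; I would relegate this refinement to a remark. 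Modulo this, the corollary is a routine strengthening and the rest is a transcription, so the proof is short: essentially ``the proof of Theorem \ref{divergent theorem} never used $b_n = 1/n$ except via Lemma \ref{partslemma}, and Lemma \ref{partslemma} holds whenever $\sum n(b_{n+1}-b_n)$ diverges.''
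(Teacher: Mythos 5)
Your proposal is correct and follows essentially the same route as the paper's own proof: the argument depends on $f$ only through $B$, Lemma \ref{partslemma} and Corollary \ref{sumtool} need only that $\sum n(b_{n+1}-b_n)$ diverges, and density/uncountability come from the freedom to prescribe an arbitrary initial continued-fraction segment and then choose each subsequent $a_{n+1}$ from infinitely many sufficiently large values. The one detail the paper makes explicit that you gloss over is the parity bookkeeping when resuming after an arbitrary initial string (one must resume at an $n^*$ with $q_{n^*}$ odd, and such $n^*$ always exist), while your extra caution about non-monotone $b_n$ is a fair reading issue that the paper itself does not address.
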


\begin{proof}
First, our proof depends on $f$ only through the number of intervals used to write $U$, so we get the result for all $f\in\mathcal{F}_B$.

Second, the condition $\sum n(b_{n+1}-b_n)$ divergent is sufficient to run the argument of Lemma \ref{partslemma}, which drives divergence throughout the rest of the proof.

Finally, in the proof of Theorem \ref{divergent theorem}, one is free to chose the initial terms of the continued fraction expansion of $\alpha$, taking up the argument given there only after this initial segment. Note that in doing so, we must take up the argument at some $n^*$ where $q_{n^*}$ is odd, after which taking all $a_n$ even will ensure all subsequent $q_n$ are odd. It is easy to show from the recurrence relation for the $q_n$ and the fact that $q_0=1$ that there are infinitely many $n$ such that $q_n$ is odd, so there is no problem finding such an $n^*$. This allows us to find a dense set of irrational $\alpha$ which are divergent.

Our only requirement for divergence is that $a_n$ are sufficiently large and even. Since there are always infinitely many choices for each $a_n$, we can construct uncountably many divergent irrational $\alpha$.
\end{proof}


\section{Liouville numbers and convergence}\label{Liouvillesec}

\subsection{Theorem 2}

\begin{defn}
An irrational real number $\alpha$ is Liouville if for all $v\geq1$, there exists
a rational number $\frac{p}{q}$ such that 

\[ \big| \alpha - \frac{p}{q} \big| <q^{-(v+1)}. \]
\end{defn}

Note that the Liouville condition is equivalent to $\Vert q\alpha \Vert < q^{-v}$ where $\Vert x \Vert$ denotes the distance from the nearest integer. Let

\[ \mathcal{K}_v = \{ \alpha: \Vert q\alpha \Vert < q^{-v} \mbox{ infinitely often} \}. \]

\noindent By Dirichlet's theorem, $\mathcal{K}_1=\mathbb{R}$ and by a result of Khintchine, for $v>1$, $\mathcal{K}_v$ is a null set (see, e.g. \cite{cassels}). By a result of Jarn\'ik \cite{jarnik} and Besicovitch \cite{besicovitch}, for $v>1$, the Hausdorff dimension of $\mathcal{K}_v$ is $\frac{2}{v+1}$. This implies that the Hausdorff dimension of the set of Liouville numbers is zero.

Liouville numbers are very well approximated by rational numbers whose denominators are not too large. Similarly, the proof of Theorem \ref{divergent theorem} relies on constructing $\alpha$ which are very closely approximated by their convergents. So it is not too surprising that there is a connection between the two, and this is the content of Theorem 2.

As before, we assume that $f=2\chi_U -1$ where $U$ is a finite union of $B$ intervals on $S^1$ with $m(U)=\frac{1}{2}$. We use the sequence $b_n=\frac{1}{n}$.

\begin{thm2}\label{thm:NL implies C}
If $\alpha$ is divergent for $0$, a function $f=2\chi_U -1$, and $b_n=\frac{1}{n}$, then $\alpha$ is Liouville.
\end{thm2}

\noindent \emph{Idea of proof:} We prove the contrapositive. We assume that $\alpha$ 
is not Liouville and decompose the $\sum f\circ T^n(0)\frac{1}{n}$ into an infinite collection of (essentially) alternating subseries. Each such series is summable, and we can bound all its partial sums uniformly with ease. We then show that the sum of these bounds is finite, and that this implies that the original series is summable.

\subsection{A decomposition scheme}

First, we want a scheme for decomposing the sequence $(f\circ T^n(0)\frac{1}{n})$ into alternating sequences.
Throughout the following, we write $[a,b]$ for $\{a, a+1, \ldots, b\}$ and will refer to such subsets of the integers as \emph{intervals}. In the decomposition, we use nested intervals with lengths related to the denominators of the continued fraction expansion, $q_i$.

We write $(c_n)=(f\circ T^n(0))$ and $(\gamma_n) = (f\circ T^n(0)\frac{1}{n}).$ We will use roman letters ($d_n, b_n$) to denote subsequences of $(c_n)$ and greek letters ($\delta_n, \beta_n$) to denote the corresponding subsequences of $(\gamma_n)$.

We recall the Denjoy-Koksma Lemma:

\begin{lemma}\cite[VI Thm 3.1]{herman}
Let $f$ be any mean zero function on $S^1$. Let $[a,b]$ be any interval of length $q_k$. Then for any $x \in S^1$,

\[ \Big|\sum_{j\in[a,b]} f\circ T^j (x)\Big| < Var(f).\]
\end{lemma}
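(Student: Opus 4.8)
The plan is to reduce the statement to a uniform ``equidistribution to within one point'' estimate for orbit segments of length $q_k$, and then convert that estimate into the stated bound by a Riemann--Stieltjes integration by parts. First I would reduce to the case $[a,b]=[0,q_k-1]$: since $\sum_{j\in[a,b]}f\circ T^j(x)=\sum_{j=0}^{q_k-1}f\circ T^j(T^a x)$ and $T^a x$ is again an arbitrary point of $S^1$, it suffices to prove $\bigl|\sum_{j=0}^{q_k-1}f\circ T^j(x)\bigr|<Var(f)$ for every $x\in S^1$. There is nothing to prove unless $Var(f)<\infty$, so assume $f$ has bounded variation (and is not a.e.\ constant, a mean-zero constant being $0$).

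The heart of the matter is the following claim: for every $x\in S^1$ and every $t\in[0,1)$, the counting function $G(t):=\#\{\,0\le j<q_k:\ T^j x\in[0,t)\,\}$ satisfies $|G(t)-q_k t|<1$. I would deduce this from the three-distance theorem together with the best-approximation property of the convergents: the $q_k$ points $T^0x,\dots,T^{q_k-1}x$ are distinct, and any two of them differ in $S^1$ by $\langle\langle m\alpha\rangle\rangle$ for some $0<m<q_k$, so each gap between cyclically consecutive orbit points has length at least $\langle\langle q_{k-1}\alpha\rangle\rangle$; the three-distance theorem bounds every gap above by $\langle\langle q_{k-1}\alpha\rangle\rangle+\langle\langle q_k\alpha\rangle\rangle$. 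Running these two bounds through the count of how many of the $q_k$ points can fall in $[0,t)$, and using the identity $q_k\langle\langle q_{k-1}\alpha\rangle\rangle+q_{k-1}\langle\langle q_k\alpha\rangle\rangle=1$, forces $G(t)$ to stay within one unit of $q_k t$. This is the step I expect to be the main obstacle: it is precisely where the arithmetic of $\alpha$ enters, and getting the clean constant $1$ (rather than some larger absolute constant) requires carrying the sharp three-distance bookkeeping through the count. An equivalent, perhaps cleaner, route to the same estimate is to read it off the two-column Rokhlin-tower decomposition of $R_\alpha$ at scale $q_k$ (towers of heights $q_k$ and $q_{k-1}$).

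Granting the claim, the conclusion is routine. Since $f$ has bounded variation, a Riemann--Stieltjes integration by parts (using the right-continuous representative of $f$) yields
\[ \sum_{j=0}^{q_k-1}f\circ T^j(x)-q_k\int_{S^1}f\,dm=-\int_{[0,1)}\bigl(G(t)-q_k t\bigr)\,df(t). \]
Because $\int_{S^1}f\,dm=0$, the left-hand side is exactly $\sum_{j=0}^{q_k-1}f\circ T^j(x)$, so
\[ \Bigl|\sum_{j=0}^{q_k-1}f\circ T^j(x)\Bigr|\le\int_{[0,1)}\bigl|G(t)-q_k t\bigr|\,|df(t)|<\int_{[0,1)}|df(t)|=Var(f), \]
the strict inequality holding because $1-|G(t)-q_kt|$ is everywhere positive while $df$ is a finite nonzero measure (here $Var(f)>0$). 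The at most $q_k$ values of $x$ for which an orbit point coincides with a discontinuity of $f$ are absorbed by the right-continuity convention, or by letting $x$ approach such a point through the other values. Finally, undoing the translation by $T^a$ gives the lemma for an arbitrary integer interval of length $q_k$.
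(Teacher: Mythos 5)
The paper does not actually prove this lemma --- it is quoted from Herman --- so the only issue is whether your argument is correct, and unfortunately its central claim fails. The assertion that $G(t):=\#\{0\le j<q_k:\ T^jx\in[0,t)\}$ satisfies $|G(t)-q_kt|<1$ for every $x$ and $t$ is equivalent to saying that each cell $[(j-1)/q_k,\,j/q_k)$ of the grid anchored at $0$ contains exactly one of the $q_k$ orbit points, and the three-distance bookkeeping does not force this: when $a_k$ is large the orbit has runs of roughly $a_k$ consecutive gaps of the short length $\langle\langle q_{k-1}\alpha\rangle\rangle$, which is strictly smaller than $1/q_k$, so the counting function drifts. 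Concretely, take $\alpha=[0;1,5,1,100,\dots]\approx 0.85694$, $k=2$, $q_2=6$, $x=0.72$: the orbit points are approximately $0.0047,\,0.1478,\,0.2908,\,0.4339,\,0.5769,\,0.72$, with five consecutive gaps of length $\langle\langle\alpha\rangle\rangle\approx 0.1431<1/6$; for $t$ slightly larger than $0.72$ one gets $G(t)=6$ while $q_kt\approx 4.32$, so $G(t)-q_kt\approx 1.68$, and with more extreme partial quotients the deviation approaches $2$. So your integration by parts, which is itself fine, can only deliver a bound of about $2\,Var(f)$, not the stated $Var(f)$.

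The repair is that the length-$1/q_k$ partition must be adapted to the orbit rather than anchored at the endpoints of $[0,t)$. Writing $z_j=x+jp_k/q_k$, these $q_k$ points are exactly equally spaced with gap $1/q_k$, and since $\delta:=\alpha-p_k/q_k$ has a fixed sign with $j|\delta|\le (q_k-1)|\delta|<1/q_{k+1}\le 1/q_k$ for $0\le j<q_k$, each true orbit point $T^jx=z_j+j\delta$ lies in the cell $I_j:=[z_j,\,z_j+1/q_k)$ (or $(z_j-1/q_k,\,z_j]$ if $\delta<0$). These cells partition $S^1$, each has measure exactly $1/q_k$ and contains exactly one orbit point, so
\[ \Bigl|\sum_{j=0}^{q_k-1} f\circ T^j(x)\Bigr|=\Bigl|\sum_{j=0}^{q_k-1}\Bigl(f(T^jx)-q_k\int_{I_j}f\,dm\Bigr)\Bigr|\le \sum_{j=0}^{q_k-1}\mathrm{osc}_{I_j}(f)\le Var(f), \]
which is Herman's argument (a small extra step gives strictness). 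Your Stieltjes framework could be kept only if the comparison is made against this orbit-adapted partition; against the fixed grid implicit in $G(t)-q_kt$ the key estimate is false and the constant is lost.
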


\begin{cor}\label{sumbound}
Let $f=2\chi_U-1$, where $U$ is the union of $B$ intervals and $m(U)=\frac{1}{2}$. Then, for any interval $[a,b]$ of length $q_k$ and any $x \in S^1$,

\[ \Big|\sum_{j\in[a,b]} f\circ T^j (x)\Big| < 4B.\]
\end{cor}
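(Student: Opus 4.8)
The plan is to bound the variation of $f = 2\chi_U - 1$ and then apply the Denjoy--Koksma Lemma directly. First I would compute $Var(f)$. Since $f$ is $+1$ on $U$ and $-1$ on $S^1 \setminus U$, and $U$ is a union of $B$ intervals, the function $f$ has at most $2B$ jump discontinuities (the endpoints of the intervals $I_l$), and at each such point the jump has magnitude $|(+1)-(-1)| = 2$. Hence $Var(f) \leq 2 \cdot 2B = 4B$. (One should be slightly careful about the convention for variation on the circle and about intervals sharing endpoints or abutting, but in the worst case the bound $4B$ holds; if two intervals of $U$ are adjacent the effective number of jumps only decreases.)

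Next I would simply invoke the Denjoy--Koksma Lemma quoted just above: for any interval $[a,b] \subset \bZ$ of length $q_k$ and any $x \in S^1$,
\[
\Big|\sum_{j\in[a,b]} f\circ T^j(x)\Big| < Var(f) \leq 4B,
\]
which is exactly the claimed inequality. So the corollary is an immediate consequence of the lemma together with the variation estimate.

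I do not expect any real obstacle here — this is a routine specialization. The only point requiring a moment's thought is the variation computation, i.e. confirming that a mean-zero indicator function for a union of $B$ intervals has total variation at most $4B$ on the circle; this follows because such a function is piecewise constant with values in $\{-1, +1\}$ and with at most $2B$ points of discontinuity, each contributing a jump of size $2$ to the variation, and no contribution from the (constant) pieces in between. Everything else is a direct citation of the Denjoy--Koksma Lemma.
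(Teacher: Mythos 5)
Your proof is correct and is exactly the argument the paper intends: the corollary is stated as an immediate consequence of the Denjoy--Koksma Lemma once one notes that $f=2\chi_U-1$ is piecewise constant with values $\pm 1$ and at most $2B$ jumps of size $2$, so $Var(f)\leq 4B$. The paper gives no written proof, and your variation computation plus the direct citation of the lemma is the same (and complete) reasoning.
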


\begin{defn}
Sequence $(x_n)$ is a \emph{pair-permutation} of $(y_n)$ if for all $k\in\mathbb{N}$, $\{x_{2k-1},x_{2k}\}$ and $\{y_{2k-1},y_{2k}\}$ are equal as sets.
\end{defn}

In other words, $(y_n)$ is obtained from $(x_n)$ by permuting some pairs of adjacent terms.

\begin{defn}
We call a sequence \emph{near-alternating} if it is a pair-permutation of an alternating sequence.
\end{defn}

Let $Q_i = \prod_{j=1}^i q_j$. Then $Q_1 = q_1$ and $Q_{i+1} = Q_i q_{i+1}$ for all
$i \geq 1$. To avoid the use of floor functions and to remove indices as efficiently as possible, we recursively define a sequence of 0's and 1's. First, $\xi_1$ is 0 if $q_1 - 4B$ is even and 1 if odd. For $i > 1$, $\xi_i$ is 0 if $(4BQ_{i-2} + \xi_{i-1})q_i - 4BQ_{i-1}$ is even and 1 if odd. Then
\begin{equation*}
\left\lfloor \frac{(4BQ_{i-2} + \xi_{i-1})q_i - 4BQ_{i-1}}{2}\right\rfloor 
= \frac{(4BQ_{i-2} + \xi_{i-1})q_i - 4BQ_{i-1} - \xi_i}{2}.
\end{equation*}

Without loss of generality, assume that $q_1 > 4B$. If this is not the case,
the proof can be modified by shifting all of the indices.

\begin{prop}\label{decomp2}
There is a decomposition $(c_n)=\sqcup_{i=1}^\infty (d_n^{(i)})$ such that 
$(d_n^{(1)})$ is union of $(q_1 - 4B - \xi_1)/2$ subsequences, 
$(d_n^{(2)})$ is a union of $((4B + \xi_{1})q_2 - 4Bq_1 - \xi_2)/2$ subsequences, and $(d_n^{(i)})$ is the union of $((4BQ_{i-2} + \xi_{i-1})q_i - 4BQ_{i-1} - \xi_i)/2$ subsequences for $i > 2$. Each of these subsequences of some $(d_n^{(i)})$ is near-alternating.
\end{prop}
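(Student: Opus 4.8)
The plan is to build the decomposition recursively, peeling off one "layer" of near-alternating subsequences at a time, where the $i$-th layer uses blocks of consecutive integers of length $q_i$. The key structural fact driving everything is Corollary \ref{sumbound}: over any interval of length $q_k$ the partial sum of $(c_n)$ has absolute value less than $4B$, so over such an interval the number of $+1$'s and the number of $-1$'s differ by less than $4B$, i.e. there are at least $(q_k - 4B)/2$ "matched pairs" of a $+1$ and a $-1$ that can be bundled into an alternating (indeed near-alternating, once we interleave them in index order) pair.

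First I would make the base case precise. Partition $\bN$ (or $[1,\infty)$, after the harmless index shift ensuring $q_1 > 4B$) into consecutive blocks of length $q_1$. Inside each such block, Corollary \ref{sumbound} guarantees at least $\lfloor (q_1 - 4B)/2 \rfloor = (q_1 - 4B - \xi_1)/2$ disjoint pairs $\{n, n'\}$ with $c_n = +1$, $c_{n'} = -1$; listing each such pair in increasing index order gives a near-alternating pair, and concatenating the $j$-th pair across all blocks (for $j = 1, \dots, (q_1-4B-\xi_1)/2$) yields $(q_1 - 4B - \xi_1)/2$ near-alternating subsequences — these constitute $(d_n^{(1)})$. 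What remains after removing layer $1$ is, within each length-$q_1$ block, a residual set of at most $4B + \xi_1$ indices all carrying the same sign (or a near-balanced leftover absorbed into the sign count via $\xi_1$); the crucial bookkeeping point is that this residual, restricted to any interval of length $q_2$ (which contains exactly $q_2$ of the original length-$q_1$ blocks), has at most $(4B + \xi_1) q_2$ indices, and by Corollary \ref{sumbound} applied at scale $q_2$ its signed sum still has absolute value below $4B$, so again at least $((4B+\xi_1)q_2 - 4B q_1 - \xi_2)/2$ matched pairs can be extracted. This is exactly the inductive step.

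The inductive step in general: after peeling layers $1, \dots, i-1$, the residual set of indices, intersected with any original length-$Q_{i-1}$ super-block, has cardinality at most $4B Q_{i-2} + \xi_{i-1}$ and a signed sum bounded by $4B$ in absolute value (here one uses Corollary \ref{sumbound} at scale $q_{i-1}$ together with the recursive definition of the $\xi$'s to keep the counts exact). Partitioning $\bN$ into consecutive length-$Q_i$ blocks, each such block contains $q_i$ of the length-$Q_{i-1}$ super-blocks, hence at most $(4B Q_{i-2} + \xi_{i-1}) q_i$ residual indices, whose signed sum is again below $4B$ by Corollary \ref{sumbound} at scale $q_i$; extracting maximal matched pairs yields $((4B Q_{i-2} + \xi_{i-1}) q_i - 4B Q_{i-1} - \xi_i)/2$ near-alternating subsequences forming $(d_n^{(i)})$, and leaves a residual with at most $4B Q_{i-1} + \xi_i$ indices per length-$Q_i$ block — precisely the hypothesis at level $i+1$. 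Finally one checks that every index is eventually removed: the residual count \emph{per length-$Q_i$ block} is at most $4B Q_{i-1} + \xi_i \leq 4B Q_{i-1} + 1$, which is $o(Q_i)$ since $q_i \to \infty$, so the density of un-removed indices tends to $0$; a slightly more careful argument (tracking that each residual index eventually lands in a block where it gets paired, since the sign imbalance cannot persist across arbitrarily many scales without violating Corollary \ref{sumbound}) shows the union is all of $(c_n)$.

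The main obstacle I anticipate is not the pairing itself but the exact arithmetic bookkeeping: showing that the residual-index counts propagate with \emph{exactly} the claimed formulas $((4B Q_{i-2} + \xi_{i-1}) q_i - 4B Q_{i-1} - \xi_i)/2$, which is why the paper introduces the parity-correction bits $\xi_i$ and the identity for $\lfloor \cdot / 2 \rfloor$ displayed just before the proposition. One must verify that $\xi_i$ as defined makes $(4B Q_{i-2} + \xi_{i-1}) q_i - 4B Q_{i-1} - \xi_i$ even so the number of subsequences is an integer, and that the "at most $4B Q_{i-1} + \xi_i$ leftover per block" bound is consistent with the next level's input $4B Q_{i-1} + \xi_i$ — i.e. the recursion closes. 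The second, more conceptual obstacle is verifying that the decomposition exhausts $(c_n)$ rather than leaving a sparse residual set behind; this requires observing that any index surviving to level $i$ sits in a length-$q_{i}$ interval whose residual signed sum is bounded, so it cannot remain unpaired indefinitely, and I would phrase this as: the set of indices not yet removed after level $i$ has upper density at most $(4B Q_{i-1} + 1)/Q_i \to 0$, and a direct argument at each fixed index shows it is removed at some finite level.
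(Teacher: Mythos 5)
Your overall scheme---recursively extracting matched $\pm$ pairs at the nested scales $Q_i$, with the bits $\xi_i$ as parity corrections and Corollary \ref{sumbound} as the source of the pair counts---is the same as the paper's. However, the central counting step is justified incorrectly. You assert that the residual's signed sum over a length-$Q_i$ block (and, one level down, over a length-$Q_{i-1}$ block) ``has absolute value below $4B$ by Corollary \ref{sumbound}.'' That corollary bounds the sum of the \emph{full} sequence $(c_n)$ over an interval of consecutive integers of length exactly $q_k$; it says nothing about the sum of a residual subset of indices over an interval of length $Q_i$, and the bound $4B$ is false at that scale. What is true, and what the formula $((4BQ_{i-2}+\xi_{i-1})q_i-4BQ_{i-1}-\xi_i)/2$ you quote actually requires, is the bound $4BQ_{i-1}$: split the length-$Q_i$ block into its $Q_{i-1}$ consecutive subintervals of length $q_i$ and apply Corollary \ref{sumbound} to each, giving $\bigl|\sum c_n\bigr|\le 4BQ_{i-1}$ over the block; then observe that each previously extracted layer removes \emph{equal} numbers of $+1$'s and $-1$'s from every length-$Q_j$ block with $j\le i$, hence, by the nesting of the blocks, contributes zero net sum to the length-$Q_i$ block, so the \emph{residual} sum obeys the same bound $4BQ_{i-1}$. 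You never make this cancellation argument, and without it the lower bound on the number of residual $+1$'s and $-1$'s per block---hence the claimed number of near-alternating subsequences and the exact leftover count $4BQ_{i-1}+\xi_i$ that closes the recursion---is unsupported. Your sketch is also internally inconsistent: if the residual sum really were below $4B$, the number of extractable pairs per block would be about $((4BQ_{i-2}+\xi_{i-1})q_i-4B)/2$, not the stated quantity, so the $-4BQ_{i-1}$ term in the formula is left unexplained by your argument.

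On exhaustion: the observation that the residual density per block tends to $0$ does not show that every index is eventually removed, and ``the sign imbalance cannot persist across scales'' is too vague to serve as the promised direct argument. A correct one-line fix: fix $n$, and take $i$ so large that $Q_i>n$ and the number of pairs extracted per block at level $i$ exceeds $n$; the block containing $n$ is then $[1,Q_i]$, and since at most $n-1$ residual indices of the same sign precede $n$, the index $n$ is among those extracted, so it cannot survive all levels. This is a minor point (the paper's own write-up is silent on it), whereas the missing cancellation argument above is the genuine gap.
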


\begin{proof}

Throughout this proof, we will take ``a length $Q_i$ interval'' to mean an interval of the form $[(j-1)Q_i + 1, jQ_i]$ for some integer $j\geq 1$.

Let $(c_n^{(0)}) = (c_n) = (f\circ T^n(0))$. By Corollary \ref{sumbound}, each length $Q_1 = q_1$ interval contains at least $(q_1-4B-\xi_1)/2$ indices such that $c_n = +1$ and the same number of indices such that $c_n = -1$. For $l = 1, \ldots, (q_1-4B-\xi_1)/2$, let $b_{2j-1}^{(1,l)}$ be the $l^{\text{th}}$ term of $(c_n)$ that is equal to $+1$ with index in the $j^{\text{th}}$ length $Q_1$ interval. Similarly, let $b_{2j}^{(1,l)}$ be the $l^{\text{th}}$ term of $(c_n)$ that is equal to $-1$ with index in the $j^{\text{th}}$ length $Q_1$ interval. Let $(d_n^{(1)})$ be the union of these $(q_1-4B-\xi_1)/2$  near-alternating sequences. Let $X_1$ be the indices of the terms of $(c_n)$ that are not in $(d_n^{(1)})$. Then the intersection of each length $Q_1$ interval with $X_1$ has size $4B+\xi_1$.

Since each length $Q_2$ interval contains $q_2$ length $Q_1$ intervals, the intersection of $Q_2$ with $X_1$ has size $(4B+\xi_1)q_2$. Additionally, Corollary \ref{sumbound} implies that the sum over each length $q_2$ interval is at most $4B$. Since there are $q_1$ intervals of length $q_2$ in each length $Q_2$ interval, the sum over each length $Q_2$ interval is at most $4Bq_1$. Since $(d_n^{(1)})$ removes the same number of $+1$'s and $-1$'s from each length $Q_2 = q_1q_2$ interval, the sum over the remaining terms of each $Q_2$ interval (the terms also in $X_1$) is also at most $4Bq_1$. Thus, each length $Q_2$ interval contains at least $((4B + \xi_{1})q_2 - 4Bq_1 - \xi_2)/2$ remaining indices (in $X_1$) such that $c_n = +1$, and similarly for $-1$. As above, let $b_{2j-1}^{(2,l)}$ be the $l^{\text{th}}$ remaining $+1$ and $b_{2j}^{(2,l)}$ be the $l^{\text{th}}$ remaining $-1$, where the indices as terms of $(c_n)$ are in the $j^{\text{th}}$ length $Q_2$ interval and in $X_1$. Let $(d_n^{(2)})$ be the union of these near-alternating sequences. Let $X_2$ be the indices of the terms of $(c_n)$ that are not in $(d_n^{(1)})$ or in $(d_n^{(2)})$. Then the intersection of each length $Q_2$ interval with $X_2$ has size $4Bq_1+\xi_2$.

For the induction hypothesis, let $i \geq 2$ be an integer. Suppose that we have chosen disjoint sequences $(d_n^{(j)})$ for all $1 \leq j \leq i$. Let $X_i$ be the set of indices of the terms of $(c_n)$ that are not in any of the $(d_n^{(j)})$ so far. Suppose that the intersection of each length $Q_i$ interval with $X_i$ has size $4BQ_{i-1} + \xi_i$. 

Since each length $Q_{i+1}$ interval contains $q_{i+1}$ length $Q_i$ intervals, the intersection of $Q_{i+1}$ with $X_i$ has size $(4BQ_{i-1}+\xi_i)q_{i+1}$. Additionally, Corollary \ref{sumbound} implies that the sum over each length $q_{i+1}$ interval is at most $4B$. Since there are $Q_i$ intervals of length $q_{i+1}$ in each length $Q_{i+1}$ interval, the sum over each length $Q_{i+1}$ interval is at most $4BQ_{i}$. Since each $(d_n^{(j)})$ for $1 \leq j \leq i$ removes the same number of $+1$'s and $-1$'s from each length $Q_{i+1}$ interval, the sum over the remaining terms of each $Q_{i+1}$ interval (the terms also in $X_i$) is also at most $4BQ_i$. Thus, each length $Q_{i+1}$ interval contains at least $((4BQ_{i-1} + \xi_{i})q_{i+1} - 4BQ_{i} - \xi_{i+1})/2$ remaining indices (in $X_i$) such that $c_n = +1$, and similarly for $-1$. As above, let $b_{2j-1}^{(i+1,l)}$ be the $l^{\text{th}}$ remaining $+1$ and $b_{2j}^{(i+1,l)}$ be the $l^{\text{th}}$ remaining $-1$, where the indices as terms of $(c_n)$ are in the $j^{\text{th}}$ length $Q_{i+1}$ interval and in $X_i$. Let $(d_n^{(i+1)})$ be the union of these near-alternating sequences. Let $X_{i+1}$ be the indices of the terms of $(c_n)$ that are not in $(d_n^{(j)})$ for $1 \leq j \leq i+1$. Then the intersection of each length $Q_{i+1}$ interval with $X_{i+1}$ has size $4BQ_i+\xi_{i+1}$. Hence, the proof follows by induction.

\end{proof}

Let $(b_n^{(i,l)})$ be the near-alternating subsequences of $(d_n^{(i)})$ obtained in Proposition \ref{decomp2}; $n$ indexes within each individual sequence, and $l$ indexes the sequences themselves. Let $l$ index them so that $b_1^{(i,l)}$ always comes before $b_1^{(i,l+1)}$ as elements of $(c_n)$.

\begin{defn}
Let $ind(b_1^{(i,l)})$ denote the index of $b_1^{(i,l)}$ as an element of $(c_n)$.
\end{defn}

We have the following control on the first elements of the near-alternating sequences.

\begin{prop}\label{index}
For all $l$, and for $i>5$,
	\[ ind(b_1^{(i,l)}) \geq \Big\lfloor  \frac{l}{4BQ_{i-4}+\xi_{i-3}}  \Big\rfloor Q_{i-3}\]
and
	\[ ind(b_1^{(i,l)}) \geq \frac{(4BQ_{i-3}+\xi_{i-2})q_{i-1}-4BQ_{i-2}-\xi_{i-1}}{2}.   \]
\end{prop}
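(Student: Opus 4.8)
The claim gives two lower bounds on $ind(b_1^{(i,l)})$, and I would prove them essentially by bookkeeping with the nested-interval structure set up in Proposition \ref{decomp2}. Recall that the subsequences making up $(d_n^{(i)})$ are organized by the length-$Q_i$ intervals: within each length-$Q_i$ interval there are exactly $(\#)/2$ pairs contributed, where $\# = (4BQ_{i-2}+\xi_{i-1})q_i - 4BQ_{i-1} - \xi_i$ for $i>2$ (and the analogous count for $i=1,2$), and by the ordering convention on $l$, the first $\#/2$ values of $l$ have their $b_1^{(i,l)}$ lying inside the first length-$Q_i$ interval $[1, Q_i]$, the next $\#/2$ inside $[Q_i+1, 2Q_i]$, and so on.

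\textbf{First bound.} For the first inequality, the key observation is that if $b_1^{(i,l)}$ lies in the $j$-th length-$Q_i$ interval, then trivially $ind(b_1^{(i,l)}) \geq (j-1)Q_i$. So I need a lower bound on $j-1$ in terms of $l$. Since each length-$Q_i$ interval supplies at most $\#/2$ values of $l$, and in fact I want to relate this to the count at a coarser scale: a length-$Q_{i-3}$ interval contains $q_{i-2}q_{i-1}q_i$ length-$Q_i$ intervals, so it supplies at most $(q_{i-2}q_{i-1}q_i)\cdot(\#/2)$ values of $l$; but it is cleaner to count at scale $Q_{i-3}$ directly. The number of indices in $X_{i-1}$ (the leftover set at the previous stage) inside a length-$Q_{i-3}$ interval is, by the induction in Proposition \ref{decomp2}, related to $4BQ_{i-4}+\xi_{i-3}$ times the appropriate product; the $d^{(i)}$-sequence uses at most these many indices, so the number of $l$'s whose $b_1^{(i,l)}$ falls in a given length-$Q_{i-3}$ interval is at most $(4BQ_{i-4}+\xi_{i-3})\cdot$(something), and crucially at most $4BQ_{i-4}+\xi_{i-3}$ \emph{pairs}, hence... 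I would run the arithmetic so that the count per length-$Q_{i-3}$ interval is bounded by $4BQ_{i-4}+\xi_{i-3}$; then $b_1^{(i,l)}$ lies in at least the $\lfloor l/(4BQ_{i-4}+\xi_{i-3})\rfloor$-th length-$Q_{i-3}$ interval past the start, giving $ind(b_1^{(i,l)}) \geq \lfloor l/(4BQ_{i-4}+\xi_{i-3})\rfloor Q_{i-3}$. The main obstacle here is getting the counting constant to come out to exactly $4BQ_{i-4}+\xi_{i-3}$ rather than something with an extra factor of $q$'s — I expect this follows because within each length-$Q_{i-3}$ interval, the set $X_{i-1}$ has size $(4BQ_{i-2}+\xi_{i-1})$ (by the Proposition), wait, $X_{i-1}\cap[\text{length } Q_{i-1}]$ has size $4BQ_{i-2}+\xi_{i-1}$, and I should track at the right level so that the relevant bound on the number of $b_1^{(i,l)}$'s in a length-$Q_{i-3}$ block is $4BQ_{i-4}+\xi_{i-3}$; this is where I'd need to be careful and is the crux of the argument.

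\textbf{Second bound.} For the second inequality, the point is simply that $b_1^{(i,l)}$ for $l\geq 1$ already comes after at least one full stage: once $i\geq 2$, the first index used by $(d_n^{(i)})$ cannot be too small, because the decomposition only starts removing indices for $(d_n^{(i)})$ after it has been determined that they lie in $X_{i-1}$ and the count forces $b_1^{(i,1)}$ to be at least as large as the number of leftover indices that must precede it. Concretely, within the first length-$Q_{i-1}$ interval $[1,Q_{i-1}]$, the set $X_{i-1}$ has size $4BQ_{i-2}+\xi_{i-1}$, and these indices get consumed by the $d^{(i)}, d^{(i+1)},\dots$ sequences; but more to the point, $b_1^{(i,l)}$ for any $l$ is at least the index of the first element used, and I'd bound it by the count of leftover indices in preceding length-$Q_{i-1}$ blocks. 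Shifting the index $i\mapsto i-1$ in the count that appears and dividing by $2$ (pairs), I get the stated bound $(4BQ_{i-3}+\xi_{i-2})q_{i-1}-4BQ_{i-2}-\xi_{i-1})/2$, which is exactly half the number of $X_{i-2}$-indices in a length-$Q_{i-1}$ interval minus the correction — i.e. the count of pairs that $(d_n^{(i-1)})$ removes from $[1,Q_{i-1}]$, all of which precede any element of $(d_n^{(i)})$ in that block. I'd spell out that $b_1^{(i,l)}$ cannot appear until after $(d_n^{(i-1)})$ has taken its share, and since $(d_n^{(i-1)})$ removes $((4BQ_{i-3}+\xi_{i-2})q_{i-1}-4BQ_{i-2}-\xi_{i-1})/2$ pairs $= $ that many $+1$'s from $[1,Q_{i-1}]$, we get the bound. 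The hypothesis $i>5$ is there to ensure all the subscripts $Q_{i-4}$, $\xi_{i-3}$, etc., are legitimate (indices $\geq 1$) and that the earlier inductive structure of Proposition \ref{decomp2} applies.

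\textbf{Expected main difficulty.} The genuine work is entirely in the first bound: matching the combinatorial counting at scale $Q_{i-3}$ against the leftover-set sizes from Proposition \ref{decomp2} so that the per-block count of $b_1^{(i,l)}$'s is exactly $4BQ_{i-4}+\xi_{i-3}$ with no stray factors. Once that identification is made, both inequalities reduce to "the $l$-th element lives at least $k$ full blocks in, and each block has length $Q_{\bullet}$." I would organize the proof by first stating the per-block count lemma cleanly, then deriving both displays as immediate consequences.
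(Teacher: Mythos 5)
Your treatment of the second bound essentially reproduces the paper's argument and is sound: the terms of $(b_n^{(i,l)})$ have indices in $X_{i-1}$, and stage $i-1$ of the construction removes the \emph{earliest} $((4BQ_{i-3}+\xi_{i-2})q_{i-1}-4BQ_{i-2}-\xi_{i-1})/2$ remaining $+1$'s and the same number of $-1$'s from the first length-$Q_{i-1}$ interval, so any surviving index is at least that large. (Your phrase ``all of which precede any element of $(d_n^{(i)})$ in that block'' is slightly too strong, since removed $+1$'s and surviving $-1$'s interleave; one should argue separately for each sign, but this is cosmetic.)

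The first bound is where you stop short, and by your own admission (``this is where I'd need to be careful and is the crux'') the step that produces the constant $4BQ_{i-4}+\xi_{i-3}$ is never supplied, so the proposal has a genuine gap exactly there. The missing observation is the one the paper uses and it is short: every index $ind(b_1^{(i,l)})$ lies in $X_{i-1}\subseteq X_{i-3}$, and the induction in Proposition \ref{decomp2} gives the \emph{exact} count $|X_{i-3}\cap[(j-1)Q_{i-3}+1,\,jQ_{i-3}]|=4BQ_{i-4}+\xi_{i-3}$ for every $j$ (this is why no stray factors of $q$'s appear). Since $l\mapsto ind(b_1^{(i,l)})$ is strictly increasing, $ind(b_1^{(i,l)})$ is at least the $l$-th smallest element of $X_{i-3}$, which must lie beyond the first $\lfloor l/(4BQ_{i-4}+\xi_{i-3})\rfloor$ length-$Q_{i-3}$ blocks; that is precisely the stated inequality. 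Your per-block counting route would close the same way once you invoke $X_{i-1}\subseteq X_{i-3}$ and the per-block size of $X_{i-3}$, but as written it is left unresolved. One further misreading to correct: there are only $((4BQ_{i-2}+\xi_{i-1})q_i-4BQ_{i-1}-\xi_i)/2$ subsequences at level $i$ in total, and \emph{all} of their first elements lie in the first length-$Q_i$ interval (each $(b_n^{(i,l)})$ begins with the $l$-th remaining $+1$ or $-1$ of the block $j=1$); your claim that successive batches of $l$'s start in successive length-$Q_i$ intervals is not how the construction works, though it is not load-bearing for either inequality.
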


\begin{rmk}
The first bound is stronger for large $l$; the second bound's purpose is to give a nontrivial lower bound when $l<4BQ_{i-4}+\xi_{i-3}$.
\end{rmk}

\begin{proof}
For the first bound, we have the following argument. Examining the proof of Proposition \ref{decomp2}, we see that the terms of $(b_n^{(i,l)})$ have indices in $X_{i-3}$, i.e. among those indices which have not been used for $(d_n^{(j)})$ for $1\leq j \leq i-3$. As noted in that proof, the intersection of $X_{i-3}$ with each length $Q_{i-3}$ interval has size $4BQ_{i-4}+\xi_{i-3}$. Therefore, the $l^{th}$ index in $X_{i-3}$ is at least $\lfloor \frac{l}{4BQ_{i-4}+\xi_{i-3}}\rfloor Q_{i-3}$.

For the second bound, we note that in Proposition \ref{decomp2} the first 
\[ \frac{(4BQ_{i-3}+\xi_{i-2})q_{i-1}-4BQ_{i-2}-\xi_{i-1}}{2}\]
 $+1$'s and $-1$'s have been removed from $c_n$ at the $i-1^{st}$ step of the process, leaving behind $X_{i-1}$. The terms of $(b_n^{(i,l)})$ are drawn from $X_{i-1}$, hence the index of any remaining term has this lower bound. 
\end{proof}



Now that we have carefully extracted our near-alternating sequences from $(c_n)$ and carefully bounded the number of such sequences and the index of the first terms, we prove two lemmas on the growth rate of these quantities. These will simplify our convergence estimates in the next subsection.

\begin{lemma}\label{linearbound1}
Fix $\alpha$. There exists a constant $E$ (uniform in $i>2$) such that 
\[ \frac{(4BQ_{i-2} + \xi_{i-1})q_i - 4BQ_{i-1} - \xi_i}{2} \leq CQ_{i-2}q_i. \]
\end{lemma}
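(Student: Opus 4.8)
The claim is essentially a crude size estimate: the numerator $(4BQ_{i-2}+\xi_{i-1})q_i - 4BQ_{i-1}-\xi_i$ is dominated, up to a multiplicative constant, by $Q_{i-2}q_i$. The plan is to discard the subtracted terms (which only help us, since they are nonnegative) and absorb the small correction term $\xi_{i-1}$ into a constant. Note there is an apparent typo in the statement — the constant is called $E$ in the ``There exists'' clause but $C$ appears in the displayed inequality; I will simply produce a constant and call it $C$, since that is what the proof of the next result will refer to.

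First I would write
\begin{equation*}
	\frac{(4BQ_{i-2}+\xi_{i-1})q_i - 4BQ_{i-1}-\xi_i}{2} \leq \frac{(4BQ_{i-2}+\xi_{i-1})q_i}{2} \leq \frac{(4BQ_{i-2}+1)q_i}{2},
\end{equation*}
using $\xi_{i-1}\in\{0,1\}$, $q_i\geq 1$, and the nonnegativity of $4BQ_{i-1}+\xi_i$. Next, since $Q_{i-2}=\prod_{j=1}^{i-2}q_j\geq 1$ for all $i>2$, we have $1\leq Q_{i-2}$, hence $4BQ_{i-2}+1\leq (4B+1)Q_{i-2}$. Combining,
\begin{equation*}
	\frac{(4BQ_{i-2}+\xi_{i-1})q_i - 4BQ_{i-1}-\xi_i}{2} \leq \frac{(4B+1)}{2}\,Q_{i-2}\,q_i,
\end{equation*}
so the conclusion holds with $C = (4B+1)/2$, which depends only on $B$ and in particular is uniform in $i$.

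The only thing to be careful about is the edge case $i=3$, where $Q_{i-2}=Q_1=q_1$ and $Q_{i-1}=Q_2$, but the same chain of inequalities applies verbatim since all quantities involved ($Q_1$, $q_2$, $q_3$, $\xi_1$, $\xi_2$, $\xi_3$) are defined and $Q_1=q_1\geq 1$. There is no real obstacle here — the lemma is a one-line bound once one observes that the negative terms can be dropped and $Q_{i-2}\geq 1$ lets us swallow the additive constant; the point of isolating it as a lemma is purely to keep the convergence estimates in the next subsection clean.
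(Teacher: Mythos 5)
Your proof is correct and follows essentially the same route as the paper, which simply asserts that the ratio is uniformly bounded because $Q_{i-2}q_i$ grows at least as fast as the leading term; you have just carried out the obvious estimate explicitly (drop the nonnegative subtracted terms, bound $\xi_{i-1}\leq 1$, use $Q_{i-2}\geq 1$) to get the constant $(4B+1)/2$. You are also right that the $E$ versus $C$ discrepancy in the statement is a typo in the paper.
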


\begin{proof}
It is easy to check that 
\[ \frac{(4BQ_{i-2} + \xi_{i-1})q_i - 4BQ_{i-1} - \xi_i}{2} \frac{1}{Q_{i-2}q_i} \]
is uniformly bounded in $i$ since $Q_{i-2}q_i$ grows at least as fast as the first term.
\end{proof}

\begin{lemma}\label{linearbound2}
Fix $\alpha$. There exists a constant $F$ (uniform in $i>5$) such that 
\[ ind(b_1^{(i,l)}) \geq Fq_{i-3}l\]
for all $l$.
\end{lemma}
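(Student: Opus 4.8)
The goal is to turn the two lower bounds for $ind(b_1^{(i,l)})$ from Proposition \ref{index} into a single bound of the form $Fq_{i-3}l$ valid for all $l$. The natural strategy is a case split on the size of $l$ relative to the threshold $4BQ_{i-4}+\xi_{i-3}$, matching the remark after Proposition \ref{index}: for large $l$ use the first bound, for small $l$ use the second.

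First I would handle the regime $l \geq 4BQ_{i-4}+\xi_{i-3}$. Here the floor in the first bound of Proposition \ref{index} satisfies $\lfloor l/(4BQ_{i-4}+\xi_{i-3})\rfloor \geq \tfrac{1}{2}\cdot l/(4BQ_{i-4}+\xi_{i-3})$ (using $\lfloor t\rfloor \geq t/2$ for $t\geq 1$), so
\[
ind(b_1^{(i,l)}) \geq \frac{l}{2(4BQ_{i-4}+\xi_{i-3})}\, Q_{i-3} = \frac{l}{2(4BQ_{i-4}+\xi_{i-3})}\, Q_{i-4} q_{i-3}.
\]
Since $\xi_{i-3}\in\{0,1\}$ and $Q_{i-4}\geq 1$, the ratio $Q_{i-4}/(4BQ_{i-4}+\xi_{i-3})$ is bounded below by a positive constant depending only on $B$ (it is at least $1/(4B+1)$). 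So in this regime the bound $ind(b_1^{(i,l)}) \geq F_1 q_{i-3} l$ holds with $F_1 = 1/(2(4B+1))$, uniformly in $i$.

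Next I would handle the small regime $l < 4BQ_{i-4}+\xi_{i-3}$, where I want to use the second bound of Proposition \ref{index}. The numerator there, $(4BQ_{i-3}+\xi_{i-2})q_{i-1}-4BQ_{i-2}-\xi_{i-1}$, is — just as in Lemma \ref{linearbound1} — comparable to $Q_{i-3}q_{i-1}$, in fact bounded below by a constant times $Q_{i-3}q_{i-1}$; since $q_{i-1}\geq 1$ this is at least a constant times $Q_{i-3} = Q_{i-4}q_{i-3} \geq \tfrac{1}{4B}(4BQ_{i-4}+\xi_{i-3}-\xi_{i-3})q_{i-3}$, and in the small regime $l < 4BQ_{i-4}+\xi_{i-3}$, so $Q_{i-3} \geq \tfrac{1}{4B}(l-\xi_{i-3})q_{i-3} \geq \tfrac{1}{4B}(l-1)q_{i-3}$, which after absorbing the $-1$ (harmless up to a constant, or one can note the statement is only interesting for $l\geq 1$ and handle $l=1$ separately) gives $ind(b_1^{(i,l)}) \geq F_2 q_{i-3} l$ for a constant $F_2$ depending only on $B$. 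Taking $F = \min\{F_1, F_2\}$ — which is uniform in $i>5$ and depends only on $B$ and $\alpha$ through $B$ — completes the argument.

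\textbf{Main obstacle.} The only delicate point is the bookkeeping in the small-$l$ regime: one must verify that the second Proposition \ref{index} bound really is $\geq$ (const)$\cdot q_{i-3} l$ on the whole range $1 \leq l < 4BQ_{i-4}+\xi_{i-3}$, which requires comparing $Q_{i-3}q_{i-1}$ to $q_{i-3}\cdot(4BQ_{i-4})$ and checking the constants work out with $\xi$'s equal to $0$ or $1$; everywhere else the estimates are immediate. As in the preceding lemmas, I would simply remark that these ratios are uniformly bounded and not belabor the explicit constants.
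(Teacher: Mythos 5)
Your overall strategy is exactly the paper's: split at the threshold $4BQ_{i-4}+\xi_{i-3}$, use the first bound of Proposition \ref{index} with $\lfloor t\rfloor\geq t/2$ for large $l$, and the second bound for small $l$. The large-$l$ case is correct as written (your $F_1=1/(2(4B+1))$ versus the paper's $1/(16B)$ is an immaterial difference). The small-$l$ case, however, contains a step that is false as stated: the numerator $(4BQ_{i-3}+\xi_{i-2})q_{i-1}-4BQ_{i-2}-\xi_{i-1}$ is \emph{not} in general bounded below by a constant times $Q_{i-3}q_{i-1}$. Since $Q_{i-2}=Q_{i-3}q_{i-2}$, the two leading terms nearly cancel when $q_{i-2}$ is close to $q_{i-1}$: if $a_{i-1}=1$ then $q_{i-1}-q_{i-2}=q_{i-3}$, and with $\xi_{i-2}=0$ the numerator is about $4BQ_{i-3}q_{i-3}$, which is smaller than $Q_{i-3}q_{i-1}$ by the factor $q_{i-3}/q_{i-1}$; for an $\alpha$ whose expansion has $a_{i-2}$ large and $a_{i-1}=1$ infinitely often, no constant uniform in $i$ works. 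This is not the situation of Lemma \ref{linearbound1}, which only needs an \emph{upper} bound by the dominant term, so the analogy you invoke does not carry over. Your ``main obstacle'' paragraph also misplaces the delicacy: the issue is precisely the subtracted $4BQ_{i-2}$ term, not the comparison of $Q_{i-3}q_{i-1}$ with $4Bq_{i-3}Q_{i-4}$.

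The gap is local and easily repaired, and the repair is exactly the paper's computation: write $4BQ_{i-3}q_{i-1}-4BQ_{i-2}=4BQ_{i-3}(q_{i-1}-q_{i-2})$ and use $q_{i-1}-q_{i-2}\geq 1$ (and $\xi_{i-1}\leq 1\leq BQ_{i-3}$) to get that the numerator, hence $ind(b_1^{(i,l)})$, is at least a constant times $Q_{i-3}$. From there your bookkeeping is fine: $Q_{i-3}=Q_{i-4}q_{i-3}\geq \frac{1}{4B}(l-\xi_{i-3})q_{i-3}$ on the range $l<4BQ_{i-4}+\xi_{i-3}$, the $-1$ is absorbed for $l\geq 2$ and $l=1$ is trivial, giving $F_2$ depending only on $B$, and $F=\min\{F_1,F_2\}$ as in the paper.
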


\begin{proof}
For $1\leq l \leq 4BQ_{i-4}+\xi_{i-3}$, using Proposition \ref{index},
\begin{align}
	ind(b_1^{(i,l)}) &\geq \frac{(4BQ_{i-3}+\xi_{i-2})q_{i-1}-4BQ_{i-2}-\xi_{i-1}}{2} \nonumber \\
				& \geq \frac{4BQ_{i-3} q_{i-1}-4BQ_{i-2}-\xi_{i-1}}{2} \nonumber \\
				& \geq BQ_{i-3} q_{i-1}-BQ_{i-2} \nonumber \\
				& = BQ_{i-3}(q_{i-1}-q_{i-2}) \nonumber \\
				& \geq BQ_{i-3} \nonumber \\				
				& \geq Bq_{i-3}(4BQ_{i-4}+\xi_{i-3})/(8B) \nonumber \\
				& \geq \frac{B}{8}q_{i-3}l \nonumber				
\end{align}
as desired.

For $l\geq 4BQ_{i-4}+\xi_{i-3}$, again by Proposition \ref{index}
\begin{align}
	ind(b_1^{(i,l)}) &\geq \Big\lfloor  \frac{l}{4BQ_{i-4}+\xi_{i-3}}  \Big\rfloor Q_{i-3} \nonumber \\
				& \geq \frac{l}{16B Q_{i-4}}Q_{i-3} \nonumber \\
				& = \frac{1}{16B}lq_{i-3}. \nonumber
\end{align}
Taking $F=\frac{1}{16B}$ finishes the proof.
\end{proof}

%

\subsection{The convergence argument}

We need the following pair of straightforward lemmas on sums involving near-alternating series and decompositions of series:

\begin{lemma}\label{lemma:alt}
Let $(\beta_n)$ be a decreasing sequence, with $|\beta_n|\to 0$, and such that exactly one of $\{ \beta_{2n-1},\beta_{2n}\}$ is positive for each integer $n$. Then $\sum_n \beta_n$ converges and 

\[ \Big|\sum_{n=1}^\infty \beta_n \Big| \leq |\beta_1|. \]
Furthermore, for any interval $[a,b]$, 

\[ \Big|\sum_{n\in[a,b]} \beta_n\Big| \leq 2|\beta_1|. \]
\end{lemma}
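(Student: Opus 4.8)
The plan is to prove the two inequalities separately, both resting on the fact that the partial sums of a near-alternating sequence with terms of decaying magnitude telescope in absolute value.

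First I would handle the full sum $\sum_{n=1}^\infty \beta_n$. Group the terms into consecutive pairs $\{\beta_{2n-1},\beta_{2n}\}$. By hypothesis exactly one term in each pair is positive, and since $(\beta_n)$ is decreasing in the sense that $|\beta_n|$ decreases, the pair sums $\beta_{2n-1}+\beta_{2n}$ all have the sign of the (larger-magnitude) odd-indexed... more carefully: I would argue that the partial sums $S_N=\sum_{n=1}^N\beta_n$ satisfy $0 \le (-1)^{?}S_N \le |\beta_1|$ by the standard alternating-series bracketing, after noting that a pair-permutation of an alternating sequence has exactly the same partial sums at even indices and partial sums at odd indices differing by a single term $\beta_N$ with $|\beta_N|\le|\beta_1|$. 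Concretely: let $(\beta_n')$ be the genuinely alternating sequence of which $(\beta_n)$ is a pair-permutation; then $\sum_{n=1}^{2k}\beta_n = \sum_{n=1}^{2k}\beta_n'$ for every $k$, because swapping the two elements of a pair does not change the pair's sum. The alternating series test applies to $(\beta_n')$ (terms decreasing in magnitude to $0$, alternating sign), so $\sum\beta_n'$ converges, its value has absolute value at most $|\beta_1'| = $ (the magnitude of the first term, which is $|\beta_1|$ or $|\beta_2|\le|\beta_1|$), and hence $\sum\beta_n$ converges to the same limit with $|\sum\beta_n|\le|\beta_1|$. One just needs to check the odd partial sums also converge to the same limit, which follows since $\beta_{2k+1}\to 0$.

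For the interval bound, write $\sum_{n\in[a,b]}\beta_n = \sum_{n=1}^b \beta_n - \sum_{n=1}^{a-1}\beta_n$, i.e. as a difference of two partial sums. Each partial sum $T_N := \sum_{n=1}^N\beta_n$ I would show satisfies $|T_N|\le|\beta_1|$: again pass to the alternating companion at the nearest even index and use the alternating-series partial-sum bound, absorbing the at-most-one leftover odd term whose magnitude is $\le|\beta_1|$. Then $|\sum_{n\in[a,b]}\beta_n| \le |T_b| + |T_{a-1}| \le 2|\beta_1|$. A mild subtlety is the case where the interval $[a,b]$ starts or ends in the middle of a pair; but since the bound $|T_N|\le|\beta_1|$ holds for \emph{every} $N$ regardless of parity, the difference-of-partial-sums argument is insensitive to this.

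The main obstacle — really the only point requiring care — is making the comparison to a genuinely alternating sequence precise when the sequence is only a pair-permutation: one must verify that pair-permutation preserves all even-indexed partial sums and changes each odd-indexed partial sum by exactly one term, and that "decreasing with $|\beta_n|\to 0$" together with "one of each pair positive" really does force the monotonicity-of-magnitude hypothesis needed for the alternating-series estimate on the companion sequence. Once that bookkeeping is set up, both inequalities are immediate from the classical alternating series bound $|\sum_{n\ge 1}(-1)^{n-1}a_n|\le a_1$ and $|\sum_{n=k}^m (-1)^{n-1}a_n|\le a_k$ for a sequence $a_n\downarrow 0$.
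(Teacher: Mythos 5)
There is a genuine gap at exactly the step you flag as the one ``requiring care,'' and the verification you defer cannot in fact be carried out: the pair-swapped companion $(\beta_n')$ need \emph{not} have decreasing magnitudes, so the classical alternating-series (Leibniz) bounds $|\sum_n \beta_n'|\leq|\beta_1'|$ and $|T_N'|\leq|\beta_1'|$ cannot be invoked for it. Swapping a pair to restore the $+,-$ pattern also swaps the two magnitudes, which then sit out of order: for instance $\beta_1=1$, $\beta_2=-0.9$, $\beta_3=-0.8$, $\beta_4=0.7,\ldots$ has companion $1,-0.9,0.7,-0.8,\ldots$, whose magnitudes $1,0.9,0.7,0.8$ are not monotone, and an alternating series with non-monotone terms does not in general satisfy the Leibniz partial-sum/remainder estimates. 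So the central comparison of your argument is unsupported as written. (Your other ingredients are sound: pair-permutation does preserve all even-indexed partial sums, and writing $\sum_{n\in[a,b]}\beta_n=T_b-T_{a-1}$ correctly reduces the second inequality to a bound $|T_N|\leq|\beta_1|$ valid for every $N$.)

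The repair is to compare against an alternating series built on the \emph{original, decreasing} magnitudes rather than on permuted ones, which is what the paper's proof does: among all sign patterns with exactly one $+$ per pair, the $k$-th pair contributes $\pm\bigl(|\beta_{2k-1}|-|\beta_{2k}|\bigr)$ with sign determined by which entry is positive, so every admissible (even partial) sum is sandwiched between $\pm\sum_n(-1)^{n+1}|\beta_n|$, and the latter is a genuine Leibniz series bounded by $|\beta_1|$. Equivalently, argue directly: $\sum_{k=1}^{K}\bigl(|\beta_{2k-1}|-|\beta_{2k}|\bigr)\leq|\beta_1|-|\beta_{2K}|$ by telescoping (using $|\beta_{2k-1}|\leq|\beta_{2k-2}|$), so all even partial sums satisfy $|T_{2K}|\leq|\beta_1|-|\beta_{2K}|$, and then $|T_{2k-1}|\leq|T_{2k-2}|+|\beta_{2k-1}|\leq|\beta_1|$; convergence follows because the pair sums are absolutely summable and $\beta_n\to0$. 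With $|T_N|\leq|\beta_1|$ for all $N$ established this way, your difference-of-partial-sums step does give the interval bound $2|\beta_1|$ (the paper instead applies the first bound to the tail starting at $a$, which yields the slightly sharper $2|\beta_a|$ noted in its remark).
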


\begin{proof}
It is easy to verify that the sign pattern giving the largest value of the full sum is $(-1)^{n+1}$ and the pattern giving the smallest value is $(-1)^{n}$. The first statement is then a standard fact about alternating series.

The second statement follows from the first, after noting that it is possible that the $a$ and $a+1^{st}$ terms of $\beta_n$ may have the same sign. 
\end{proof}

\begin{rmk}
The proof actually gives 

\[  \Big|\sum_{n\in[a,b]} \beta_n\Big| \leq 2|\beta_a|\]
but we don't need this below.
\end{rmk}

\begin{lemma}\label{funnysumtool}
Suppose that we have a decomposition of a sequence $(\gamma_n) = \sqcup_i(\delta_n^{(i)})$ satisfying:
\begin{itemize}
	\item For all $i$, $\sum_n \delta_n^{(i)}$ converges.
	\item For all $i$ and all $[a,b]$, there exist $D^{(i)}$ such that $|\sum_{n\in[a,b]} \delta_n^{(i)}|\leq D^{(i)}$, and $\sum_i D^{(i)}<\infty$.
\end{itemize}
Then $\sum_n \gamma_n$ converges.
\end{lemma}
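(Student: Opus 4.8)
The plan is to verify the Cauchy criterion for the sequence of partial sums $S_N = \sum_{n=1}^N \gamma_n$. The key structural observation is that for integers $N_1 < N_2$, the block of terms $\gamma_{N_1+1},\dots,\gamma_{N_2}$ is partitioned by the decomposition $(\gamma_n)=\sqcup_i(\delta_n^{(i)})$, and since a subsequence preserves order, the terms belonging to $(\delta_n^{(i)})$ occupy a (possibly empty) interval of indices $[a_i,b_i]$ inside that subsequence; only finitely many $i$ contribute a nonempty block. Hence
\[
\sum_{n=N_1+1}^{N_2}\gamma_n \;=\; \sum_{i}\ \sum_{n\in[a_i,b_i]}\delta_n^{(i)},
\]
a finite sum whose $i$-th inner term is controlled by $D^{(i)}$ via the second hypothesis.

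Now fix $\varepsilon>0$. First I would use $\sum_i D^{(i)}<\infty$ to choose $K$ with $\sum_{i>K}D^{(i)}<\varepsilon/2$, so that the part of the displayed sum over $i>K$ is at most $\varepsilon/2$ for every $N_1<N_2$. For the finitely many indices $i\le K$, I would invoke the first hypothesis: since $\sum_n\delta_n^{(i)}$ converges, its partial sums are Cauchy, so there is $M_i$ with $\big|\sum_{n=a}^{b}\delta_n^{(i)}\big|<\varepsilon/(2K)$ whenever $b\ge a>M_i$. Finally, because the first $M_i$ terms of $(\delta_n^{(i)})$ sit at finitely many positions within $(\gamma_n)$, there is $M^{(i)}$ such that $N_1\ge M^{(i)}$ forces all of those terms to lie among $\gamma_1,\dots,\gamma_{N_1}$; for such $N_1$ the block $[a_i,b_i]$ is either empty or has $a_i>M_i$ (and if $(\delta_n^{(i)})$ happens to be finite, it eventually contributes nothing at all).

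Setting $M=\max_{i\le K}M^{(i)}$, for any $N_2>N_1\ge M$ we obtain
\[
\Big|\sum_{n=N_1+1}^{N_2}\gamma_n\Big|
\;\le\; \sum_{i\le K}\Big|\sum_{n\in[a_i,b_i]}\delta_n^{(i)}\Big| + \sum_{i>K}\Big|\sum_{n\in[a_i,b_i]}\delta_n^{(i)}\Big|
\;<\; K\cdot\frac{\varepsilon}{2K} + \sum_{i>K}D^{(i)} \;<\; \varepsilon,
\]
which is the Cauchy criterion, so $\sum_n\gamma_n$ converges.

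The argument is essentially bookkeeping, and I do not anticipate a genuine obstacle. The one point that needs a little care is the last step of the second paragraph: confirming that, as $N_1\to\infty$, the starting index $a_i$ of the $i$-th block tends to infinity, so that the Cauchy estimate for $\sum_n\delta_n^{(i)}$ genuinely applies to it. This is immediate once one notes that each subsequence is either finite (eventually contributing nothing) or has its $m$-th term at a well-defined finite position in $(\gamma_n)$.
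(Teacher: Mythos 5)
Your proof is correct and follows essentially the same route as the paper's: both verify the Cauchy criterion by splitting a block $[N_1+1,N_2]$ into its intersections with the subsequences, controlling the tail $i>K$ by $\sum_{i>K}D^{(i)}<\varepsilon/2$ and the finitely many head terms by the Cauchy property of each convergent series $\sum_n\delta_n^{(i)}$ (the paper uses $\varepsilon/2^{i+1}$ where you use $\varepsilon/(2K)$, an immaterial difference). Your explicit remark that the starting subsequence index $a_i$ tends to infinity with $N_1$ is a point the paper leaves implicit, and it is handled correctly.
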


\begin{proof}
We prove convergence by the Cauchy Criterion. Let $\epsilon>0$ be given. For each $i \in \bN$, let $X_i$ be the set of indices of the terms from $(\gamma_n)$ that are in $(\delta_n^{(i)})$. Let $I\in\mathbb{N}$ be such that $\sum_{i > I} D^{(i)}<\epsilon/2$. For each $i\leq I$, let $N_i\in \mathbb{N}$ be such that for any $m_1,m_2\geq N_i$ the terms of $(\delta_n^{(i)})$ such that $\phi^{(i)}(n)\in[m_1,m_2]$ satisfy 

\[ \Big|\sum_{n:\phi^{(i)}(n)\in[m_1,m_2]} \delta_n^{(i)}\Big| = \Big|\sum_{[m_1,m_2]\cap X_i} \delta_n\Big| < \frac{\epsilon}{2^{i+1}}. \]

Let $N=\max\{N_i\}$. Then, for any $m_1,m_2\geq N$,
\begin{align}
	\Big| \sum_{n\in[m_1,m_2]} \gamma_n \Big| & = \Big| \sum_{i=1}^I \sum_{[m_1,m_2]\cap X_i} \delta_n + \sum_{i > I} \sum_{[m_1,m_2]\cap X_i} \delta_n \Big| \nonumber \\
	& \leq \sum_{i=1}^I \Big| \sum_{[m_1,m_2]\cap X_i} \delta_n \Big| + \sum_{i>I} \Big| \sum_{[m_1,m_2]\cap X_i} \delta_n \Big| \nonumber \\
	& \leq \frac{\epsilon}{2} + \sum_{i> I} D^{(i)} \nonumber \\
	&\leq \epsilon. \nonumber
\end{align}
\end{proof}

We recall a few facts relating the Liouville property to the continued fraction expansion.

Let $NL$ be the set of irrational non-Liouville numbers, and denote by $NL_v=\{ \alpha: \Vert q\alpha \Vert < q^{-v} \mbox{ has no solutions} \}$. Then $\cup_{v\geq 1} NL_v = NL$. Let us assume $\alpha \in NL_v$. Then we have that for all $q$, 

\[ \Vert q\alpha \Vert \geq q^{-v}. \]
It is a standard result in the theory of continued fractions (see, e.g. \cite[Thm 9]{khinchin}) that 

\[ \Vert q_k \alpha \Vert < \frac{1}{q_{k+1}} \mbox{ for all } k. \]
We thus have 

\[ \frac{1}{q_k^{v}} \leq \Vert q_k\alpha \Vert < \frac{1}{q_{k+1}} \]
and 

\[ \frac{q_{k+1}}{q_k^{v}}<1, \mbox{ or } q_{k+1} < q_k^{v} \mbox{ for all }k. \]

From the recurrence relation for $q_k$ and a comparison with the Fibonnacci sequence, we also have that there is some $\varphi>1, C>0$ such that

\[ q_k \geq  C \varphi^k.\]

We are now ready to complete our proof of Theorem \ref{thm2}.

\begin{proof}[Proof of Theorem \ref{thm2}]

We use the decomposition from Proposition \ref{decomp2} to decompose the sequence $(\gamma_n) = (f\circ T^n(0)/n)$ into subsequences $(\delta_n^{(i)})$. Using the decomposition $(c_n) = \sqcup_i(d_n^{(i)})$ and the associated index function, let $\delta_n^{(i)} = \gamma_{ind(d_n^{(i)})}$ and $\beta_n^{(i,l)} = \gamma_{ind(b_n^{(i,l)})}$. Then the decompositions $(\gamma_n) = \sqcup_i(\delta_n^{(i)})$ and $(\delta_n^{(i)})=\sqcup_l(\beta_n^{(i,l)})$ also satisfy Propositions \ref{decomp2} and \ref{index}.

To prove convergence using the mechanism of Lemma \ref{funnysumtool}, we need only obtain estimates for $i$ sufficiently large, so we restrict our attention to $i>5$. Then, using Lemma \ref{linearbound1} the sequence $(\delta_n^{(i)})$ consists of at most $CQ_{i-2}q_i$ near-alternating sequences $(\beta_n^{(i,l)})$. As before, they are indexed so that $\beta_1^{(i,l)}$ always comes before $\beta_1^{(i,l+1)}$ as elements of $(\gamma_n)$. The individual series $\sum_n \beta_n^{(i,l)}$ converge by Lemma \ref{lemma:alt}, with 
\[ |\sum_{n\in [a,b]} \beta_n^{(i,l)}| \leq 2|\beta_1^{(i,l)}|.\]
Using Lemma \ref{linearbound2}
\[ |\beta_1^{(i,l)}| \leq \frac{1}{Fq_{i-3}l}.\]

Applying these bounds and Lemma \ref{lemma:alt}, we get a bound on partial sums of $\delta_n^{(i)}$ as follows: for any interval $[a,b]$,

\begin{align}
	|\sum_{n\in[a,b]} \delta_n^{(i)} | &\leq 2\sum_{l=1}^{EQ_{i-2}q_i} |\beta_1^{(i,l)}| \nonumber \\
						& \leq 2\sum_{l=1}^{EQ_{i-2}q_i} \frac{1}{Fq_{i-3}l}. \nonumber 
\end{align}
This can be bounded above by

\begin{align} 
	\frac{2}{Fq_{i-3}}( 1+ \log (E Q_{i-2}q_i)) & \leq \frac{2}{Fq_{i-3}}(1+ \log (Eq_i^{i-1})) \nonumber \\
	& = \frac{2}{Fq_{i-3}}( 1+ \log E +(i-1) \log q_i) \nonumber \\
	& \leq \frac{2}{Fq_{i-3}}( 1+ \log E + (i-1) \log q_{i-3}^{v^3}), \nonumber
\end{align}
using that $q_i<q_{i-3}^{v^3}$ by the Liouville condition.

By the exponential growth of $q_k$, $\sum_k \frac{1}{q_k}<\infty$. We claim that $\sum_k \frac{k \log q_k}{q_{k}}$ converges, which will ensure that $\sum_i D^{(i)}$ converges and permit application of Lemma \ref{funnysumtool}. Note that for $x>e$, $\frac{\log x}{x}$ is decreasing and recall that $q_k \geq C \varphi^k$. Since $q_3$ is surely greater than $e$, for $k\geq 3$, $\frac{k \log q_k}{q_k} \leq \frac{k \log C\varphi^k}{C\varphi^k}$ and

\begin{align}
	\sum_{k=1}^\infty \frac{k \log q_k}{q_k} & \leq \frac{ \log q_1}{q_1}+ \frac{2 \log q_2}{q_2} + \sum_{k=3}^\infty \frac{k \log C\varphi^k}{C\varphi^k} \nonumber \\
	& = \frac{\log q_1}{q_1} + \frac{2\log q_2}{q_2} + \sum_{k=3}^\infty \frac{k\log C}{C\varphi^k}+\frac{k^2\log\varphi}{\varphi^k}\nonumber.
\end{align}
As $\varphi>1$, this converges.

We then have that $\sum_{i=1}^\infty \Big| D^{(i)} \Big|$ converges. By Lemma \ref{funnysumtool}, this implies that $\Big| \sum_{n=1}^\infty \gamma_n \Big|$ converges, establishing the theorem.

\end{proof}

As a corollary, we obtain the following result on the size of the set of divergent $\alpha$.

\begin{cor}
The set of all divergent $\alpha$ has Hausdorff dimension 0.
\end{cor}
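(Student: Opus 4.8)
The plan is to deduce this corollary directly from Theorem \ref{thm2} together with the standard dimension theory for sets of well-approximable numbers already cited in the paper. By Theorem \ref{thm2}, every divergent $\alpha$ is a Liouville number, so the set $D$ of divergent $\alpha$ is contained in the set $L$ of Liouville numbers. Hausdorff dimension is monotone under inclusion, so $\dim_H D \leq \dim_H L$, and it therefore suffices to show $\dim_H L = 0$.

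For the latter, I would invoke the Jarn\'ik--Besicovitch theorem exactly as recalled in the paper: for each $v > 1$, the set $\mathcal{K}_v = \{\alpha : \Vert q\alpha\Vert < q^{-v}\text{ infinitely often}\}$ has Hausdorff dimension $\frac{2}{v+1}$. The set of Liouville numbers (intersected with $(0,1)$, say) is precisely $\bigcap_{v \geq 1}\mathcal{K}_v$, since by definition $\alpha$ is Liouville iff $\Vert q\alpha\Vert < q^{-v}$ has solutions for every $v$, equivalently (since the condition gets stronger as $v$ grows and each $\mathcal{K}_v$ only records ``infinitely often'') $\alpha \in \mathcal{K}_v$ for all $v$. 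Hence $L \subseteq \mathcal{K}_v$ for every $v$, so $\dim_H L \leq \frac{2}{v+1}$ for all $v > 1$, and letting $v \to \infty$ gives $\dim_H L = 0$. Combining, $\dim_H D \leq \dim_H L = 0$, so $\dim_H D = 0$.

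I would write the proof as just a few sentences, since every ingredient is already stated in the excerpt: the inclusion $D \subseteq L$ is Theorem \ref{thm2}, the dimension bound $\dim_H \mathcal{K}_v = \frac{2}{v+1}$ is the cited result of Jarn\'ik and Besicovitch, and monotonicity of Hausdorff dimension is standard. There is no real obstacle here; the only mild subtlety worth a clause of justification is the identification of the Liouville set with $\bigcap_{v}\mathcal{K}_v$, which follows immediately from the definition stated in the paper (the equivalence $\Vert q\alpha\Vert < q^{-v}$ infinitely often vs. has a solution is harmless for the dimension bound, since $\mathcal{K}_v \subseteq \{\alpha : \Vert q\alpha\Vert < q^{-v}\text{ has a solution}\}$ is not even needed in that direction --- we only need $L \subseteq \mathcal{K}_v$, which holds because a Liouville number has solutions to $\Vert q\alpha\Vert < q^{-v-1}$ for every $v$, forcing $\Vert q\alpha\Vert < q^{-v}$ infinitely often). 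Indeed the excerpt already states outright that ``the Hausdorff dimension of the set of Liouville numbers is zero,'' so the corollary is genuinely a two-line consequence of Theorem \ref{thm2} plus that remark.

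\begin{proof}
By Theorem \ref{thm2}, every divergent $\alpha$ is a Liouville number, so the set of divergent $\alpha$ is contained in the set of Liouville numbers. The Liouville numbers in $(0,1)$ form the set $\bigcap_{v \geq 1} \mathcal{K}_v$: if $\alpha$ is Liouville then for every $v$ there are solutions to $\Vert q\alpha \Vert < q^{-(v+1)}$, hence $\Vert q\alpha \Vert < q^{-v}$ holds infinitely often, so $\alpha \in \mathcal{K}_v$ for all $v$; the reverse inclusion is immediate from the definitions. By the result of Jarn\'ik and Besicovitch recalled above, $\dim_H \mathcal{K}_v = \frac{2}{v+1}$ for each $v > 1$, so the set of Liouville numbers has Hausdorff dimension at most $\frac{2}{v+1}$ for every $v > 1$, hence Hausdorff dimension $0$. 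Since Hausdorff dimension is monotone under inclusion, the set of divergent $\alpha$ also has Hausdorff dimension $0$.
\end{proof}
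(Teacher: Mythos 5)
Your proof is correct and follows exactly the route the paper intends: the corollary is an immediate consequence of Theorem \ref{thm2} (divergent $\Rightarrow$ Liouville) combined with the Jarn\'ik--Besicovitch bound $\dim_H \mathcal{K}_v = \frac{2}{v+1}$ and monotonicity of Hausdorff dimension, which is precisely the argument sketched in Section \ref{Liouvillesec}. No gaps; your extra remark justifying ``infinitely often'' for Liouville $\alpha$ is a harmless refinement.
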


%

\section{Convergent Liouville numbers}\label{conv Liou}

We are left with the question of whether all Liouville $\alpha$ are divergent. The proof we noted in the introduction about convergence for non-Liouville $\alpha$ does not touch this question. We remark that the argument for divergence in Section \ref{sec:divergent} uses the odd parity of the $q_n$'s for the $\alpha$ we constructed. This certainly does not hold for all Liouville $\alpha$. We do not know if some sort of requirement on parity of the $q_n$'s is necessary to ensure divergence, but we will show here that, independent of this concern, there are convergent Liouville numbers. Specifically we will construct Liouville numbers for which the arguments of the previous section can still be used to prove convergence.

\begin{thm3}
There exist Liouville numbers $\alpha$ which are convergent for any $f=2\chi_U-1$, where $U$ is a union of finitely many intervals with $m(U)=\frac{1}{2}$, and any $x\in S^1$. The set of such $\alpha$ is dense.
\end{thm3}

\begin{proof}
The convergence argument of the previous section is our main tool.

It is standard that

\begin{equation}\label{upperbound}
	 \Vert q_k\alpha \Vert < \frac{1}{q_{k+1}}.
\end{equation}

Now suppose that we define $\alpha$ by choosing $a_{k+1}= q_k^{k-1}$. Note that as $q_k$ is defined only in terms of $a_1, \ldots, a_k$, this defines $\alpha$ inductively. We can take up this inductive definition after any initial sequence $[a_1a_2\ldots a_n]$, producing a dense set of $\alpha$. Then,

\[ q_{k+1}=a_{k+1}q_k+q_{k-1} \geq q_k^k. \]
With \eqref{upperbound}, this implies that for all $k$,

\[ \Vert q_k\alpha \Vert < \frac{1}{q_k^k}. \]
Thus, for any $v\geq 1$, all $\frac{p_k}{q_k}$ with $k\geq v$ satisfy the approximation condition in the definition of a Liouville number, so $\alpha$ is Liouville.

On the other hand, 

\begin{equation}\label{qkbound}
	 q_{k+1} \leq 2q_k^{k}
\end{equation}
at least for $k>1$. 

Examining the proof of Theorem \ref{thm:NL implies C}, we see that the key necessity for convergence of the series is summability of $\frac{k \log q_k}{q_{k-1}}$. By equation \eqref{qkbound},

\[\frac{k \log q_k}{q_{k-1}} \leq \frac{k \log 2q_{k-1}^{k-1}}{q_{k-1}} = \frac{k\log 2+ k(k-1)\log q_{k-1}}{q_{k-1}}. \]
Clearly $\sum_k \frac{k \log 2}{q_{k-1}}$ converges. For the second summand, again using $q_k\geq C\varphi^k$, 

\begin{align}
	 \sum_{k=1}^\infty \frac{k(k-1) \log q_{k-1}}{q_{k-1}} & \leq \frac{2 \log q_1}{q_1}+ \frac{6 \log q_2}{q_2} + \sum_{k=3}^\infty \frac{k(k-1)\log C\varphi^k}{C\varphi^k} \nonumber \\
	 & = \frac{2 \log q_1}{q_1}+ \frac{6 \log q_2}{q_2} + \sum_{k=3}^\infty \frac{k(k-1)\log C}{C\varphi^k}+\frac{k^2(k-1)\log\varphi}{C\varphi^k}. \nonumber
\end{align}
The sum converges, so by the argument of the proof for Theorem \ref{thm:NL implies C}, $\alpha$ is convergent for any $f$ as above and any $x$.

\end{proof}

Of course, the proof of Theorem \ref{thm2} presented here allows many other constructions of convergent Liouville numbers. Examining the proof above, we see that a sufficient condition for convergence is having $\frac{k\log q_k}{q_{k-3}}$ summable. This leaves plenty of leeway to choose $a_k$ sufficiently large to produce a Liouville number.

\appendix

\section{Convergence for non-Liouville numbers}\label{appendix}

We provide a second proof of Theorem \ref{thm2}, working out Kakutani and Petersen's remark from \cite{pk} that convergence follows from estimates of the discrepancy for non-Liouville numbers.

Let $\omega = ( x_1, x_2, \ldots)$ be a sequence of elements in $[0,1]$. For our work we will take $\omega = (n\alpha+x \ (mod \ 1))_n$. The failure of equidistribution of this sequence is measured by the discrepancy function:

\begin{defn}\label{discrep}
The \emph{discrepancy} of $\omega$ is
\[ D_N = D_N(\omega) = \sup_{0\leq \alpha < \beta \leq 1} \Big| \frac{\#([\alpha,\beta)\cap \omega|_{[1,N]})}{N} - (\beta-\alpha) \Big|.\]
\end{defn}

By the \emph{discrepancy of $\alpha$}, or $D_N(\alpha)$ we will mean $D_N((n\alpha + x \ (mod \ 1)))$. It is immediate from Definition \ref{discrep} that this function of $N$ is independent of $x$.

We recall the following definition and its connection to the Liouville property.

\begin{defn}
Let $\eta >0$. We say \emph{$\alpha$ is of type $\eta$} if $\eta = \sup \gamma$ such that 
\[ \liminf_{q\to\infty} q^\gamma \langle\langle q\alpha \rangle\rangle =0 \quad \mbox{ where } q\in\mathbb{N}.\]
As before $\langle\langle - \rangle\rangle$ denotes distance from the nearest integer.
\end{defn}

It is easy to check that $\alpha$ is Liouville if and only if $\alpha$ is of type $\infty$. By Dirichlet's approximation theorem, all numbers are of type at least 1.

The following result on the discrepancy of non-Liouville $\alpha$ is the key tool we need:

\begin{thm}\label{alpha discrep}
\emph{(See, e.g. \cite[Thm 3.2]{kn})}
Let $\alpha$ be of type $\eta$. Then for all $\epsilon>0$, $D_N(\alpha) = O(N^{-1/\eta+\epsilon}).$
\end{thm}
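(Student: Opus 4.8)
The plan is to reduce the geometric problem to an arithmetic estimate via the Erdős–Turán inequality. For any $H\in\bN$ the inequality bounds the discrepancy of a sequence $(x_n)$ by
\[ D_N \leq \frac{C}{H} + \frac{C}{N}\sum_{h=1}^{H}\frac{1}{h}\Big|\sum_{n=1}^{N}e^{2\pi i h x_n}\Big| \]
for an absolute constant $C$ (see \cite{kn}). Applying this with $x_n = n\alpha + x$, the factor $e^{2\pi i h x}$ pulls out of the inner sum and disappears under the absolute value — recovering the $x$-independence already observed — and the remaining geometric sum satisfies $|\sum_{n=1}^N e^{2\pi i h n\alpha}| \leq \min(N,\tfrac12\langle\langle h\alpha\rangle\rangle^{-1}) \leq \tfrac12\langle\langle h\alpha\rangle\rangle^{-1}$, using $|\sin\pi t|\geq 2\langle\langle t\rangle\rangle$. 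Thus everything rests on estimating $\sum_{h=1}^{H}\frac{1}{h\langle\langle h\alpha\rangle\rangle}$ and then choosing $H$ optimally.

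The crux is the arithmetic lemma that, for $\alpha$ of finite type $\eta$ and every $\epsilon>0$,
\[ \sum_{h=1}^{m}\frac{1}{h\langle\langle h\alpha\rangle\rangle} = O(m^{\eta-1+\epsilon}). \]
First I would translate the type hypothesis into continued-fraction language: for every $\gamma>\eta$ the condition $\liminf_q q^\gamma\langle\langle q\alpha\rangle\rangle>0$, combined with the standard bound $\langle\langle q_k\alpha\rangle\rangle<1/q_{k+1}$, yields the uniform growth bound $q_{k+1}\ll q_k^{\gamma}$. I would stress that the naive pointwise estimate $\langle\langle h\alpha\rangle\rangle\gg h^{-\gamma}$ is by itself useless here: inserted term-by-term it overcounts badly, since that lower bound is nearly sharp only for the sparse set of $h$ close to the convergent denominators $q_k$, whereas $\langle\langle h\alpha\rangle\rangle$ is of order $1$ for typical $h$.

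To capture this, I would decompose $\{1,\dots,m\}$ into blocks of $q_k$ consecutive integers (with $q_K\leq m<q_{K+1}$). On each block the three-distance structure — equivalently the best-approximation property recorded in the Setup, $\langle\langle q_{k-1}\alpha\rangle\rangle\leq\langle\langle h\alpha\rangle\rangle$ for $0<h<q_k$ — shows that the points $\{h\alpha\}$ are $\gtrsim 1/q_k$-separated, so at most $O(j)$ of them lie within $j/q_k$ of an integer. The bulk of a block therefore contributes $\sum\langle\langle h\alpha\rangle\rangle^{-1}\ll q_k\log q_k$, which after the $1/h\approx 1/q_k$ weighting is only $O(\log q_k)$. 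The single resonant term, where $\{h\alpha\}$ is closest to an integer and $\langle\langle h\alpha\rangle\rangle$ can be as small as $\langle\langle q_k\alpha\rangle\rangle\approx 1/q_{k+1}$, contributes $\approx q_{k+1}/q_k$, and this is exactly where the finite-type bound $q_{k+1}\ll q_k^{\eta+\epsilon}$ is indispensable, giving $O(q_k^{\eta-1+\epsilon})$. Summing over the $O(\log m)$ blocks, whose denominators grow geometrically, the total is dominated by the largest block and is $O(m^{\eta-1+\epsilon})$.

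With the lemma in hand the proof closes quickly: the Erdős–Turán bound becomes
\[ D_N \ll \frac{1}{H} + \frac{1}{N}\,H^{\eta-1+\epsilon}, \]
and balancing the two terms with $H\approx N^{1/(\eta+\epsilon)}$ gives $D_N = O(N^{-1/(\eta+\epsilon)}) = O(N^{-1/\eta+\epsilon'})$, which is the claim after renaming $\epsilon'$ and noting $-1/(\eta+\epsilon)\to -1/\eta$ as $\epsilon\to 0$. I expect the summation lemma — specifically the block estimate separating the well-distributed bulk (handled by the three-distance/best-approximation structure) from the resonant near-integer approaches (handled by the finite-type growth bound on $q_{k+1}/q_k$) — to be the only real obstacle; the Erdős–Turán reduction and the final optimization over $H$ are routine.
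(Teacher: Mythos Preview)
The paper does not supply its own proof of this theorem; it simply cites \cite[Thm 3.2]{kn} and uses the result as a black box in the appendix proof of Theorem~\ref{thm2}. Your sketch is correct and is in fact the standard argument given in that reference: the Erd\H{o}s--Tur\'an reduction to exponential sums, the estimate $\sum_{h\leq m}(h\langle\langle h\alpha\rangle\rangle)^{-1}=O(m^{\eta-1+\epsilon})$ via block decomposition along the convergent denominators (separating the well-spaced bulk from the resonant term controlled by $q_{k+1}\ll q_k^{\eta+\epsilon}$), and the final optimization in $H$. So there is nothing to compare---you have reconstructed the cited proof rather than diverged from the paper.
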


We now prove Theorem \ref{thm2} using this result.

\begin{proof}[Proof of Theorem \ref{thm2}]
Let $\alpha$ be non-Liouville; suppose it is of type $1\leq\eta<\infty$. Fix any $x\in[0,1)$. Let $S_nf = \sum_{i=1}^n f(n\alpha+x).$ Using summation by parts,
\[ \sum_{n=1}^N \frac{f(n\alpha+x)}{n} = \frac{S_Nf}{N} + \sum_{n=1}^{N-1} \frac{S_nf}{n(n+1)}.  \]
Using Theorem \ref{alpha discrep} and picking $\epsilon>0$ so small that $-1/\eta+\epsilon<0$, one can easily show that $|S_nf| = O(n^{1-1/\eta+\epsilon})=O(n^{1-t})$ for some $t>0$.  From this we immediately have that $\frac{S_Nf}{N}\to 0$ as $N\to\infty$ and that $\sum_{n=1}^\infty \frac{S_nf}{n(n+1)}$ converges, completing the proof.
\end{proof}

\bibliographystyle{alpha}
\bibliography{bib}

\end{document}